
\documentclass[amstex,10pt,reqno]{amsart}
\usepackage{amsmath,amsfonts,amssymb,amsthm,enumerate,hyperref,multicol,enumitem}


\newtheorem{lemma}{Lemma}
\newtheorem{thm}{Theorem}

\newtheorem{prop}{Proposition}
\newtheorem{definition}{Definition}

\newtheorem{remark}{Remark}
\thispagestyle{empty}
\numberwithin{equation}{section}
\allowdisplaybreaks

\begin{document}
\title[Study of fractional evolution equations involving Hilfer fractional derivative of order $1<\gamma<2$ and type $0 \leq \delta \leq 1$]
{Study of fractional evolution equations involving Hilfer fractional derivative of order $1<\gamma<2$ and type $0 \leq \delta \leq 1$}


\author{}
\maketitle

\leftline{ \scriptsize \it  }

\begin{center}
\textbf{Anjali Jaiswal$^1$, D. Bahuguna$^2$} \vskip0.2in
$^{1,2}$Department of Mathematics\\[0pt]
Indian Institute of Technology Kanpur\\[0pt]
Kanpur-208016, India\\[0pt]
\end{center}

\begin{abstract}
In this paper we investigate  fractional differential equations with Hilfer fractional derivative of order $1<\gamma<2$ and type $\delta \in [0,1]$ in a Banach space. We introduce a family of general fractional cosine operator functions of order $1<\gamma<2$ and type $\delta \in [0,1]$ and discuss their properties to give a suitable definition of mild solution of the semilinear evolutin equation. In last section an example is presented.  \newline
MSC 2010: 34G10, 34A08, 34G20, 34A12. \newline
Keywords: Hilfer fractional derivative, strong solution, mild solution, solution operator, fixed point theorem.
\end{abstract}

\section{Introduction}
Fractional calculus has been appeared as an efficient tool for analyzing various real problems due to its nonlocal behavior and linearity, this include fractional oscillator, viscoelastic models, diffusion in porous media, signal analysis, complex systems, medical imaging, pollution control and population growth etc. Several definitions for the fractional derivatives and integrals are present in the literature. The mostly used are Riemann-Liouville, Caputo and Grunwald-Letnikov fractional derivative etc. Later, a new notion of fractional derivative was introduced named as ``Generalized fractional derivative of order $\gamma$ and type $\delta \in [0,1]$", which was utilised in the theoretical modelling of broadband dielectric relaxation spectroscopy for glasses. It contains  Riemann-Liouville and Caputo fractional derivative as  special cases for $\delta=0$ and $\delta=1$ respectively. Some important research papers containing Hilfer fractional derivative are-\cite{sandev2011fractional},\cite{furati2012existence}, \cite{kamocki2016fractional},\cite{mahmudov2015approximate}, \cite{ahmed2018impulsive},\cite{gu2015existence},\cite{ahmed2018nonlocal}, \cite{mei2015existence},\cite{hilfer2002experimental}. In most of the cases $0<\gamma<1$.\\

\indent Researchers have been trying many problems modelled as differential equations of arbitrary order. In most of the scenarios the order  of the fractional derivative is taken less than 1.   Higher order problems have been discussed in less quantity but few papers are available with order of derivative $\gamma \in (1,2)$.  In \cite{MPJ}, Mei et al. studied the properties of Mittag-Leffler function $E_{\gamma}(at^{\gamma})$ for $1<\gamma<2$ and defined a function  named $\gamma$-order cosine function having the similar properties as $E_{\gamma}(at^{\gamma})$ and proved that it is correlated with the solution operator of an abstract Cauchy problem. The Cauchy problem is well-posed iff the linear operator generates an $\gamma$-order cosine function. In \cite{MPZ}, Mei et al. examined a $\gamma$-order Cauchy problem with Riemann-Liouvile fractional derivative, $1<\gamma<2$, by defining a new family of linear operators called as ``Riemann-Liouville $\gamma$-order fractional resolvent".\\

\indent Shu et al. \cite{XQW} investigated a  semilinear fractional integro-differential equation of order $1<\gamma<2$
 whose associated linear operator is a sectorial operator of type $(M,\theta, \gamma, \mu)$. They defined some families of operators to give a suitable definition of a mild solution  and established the existence results for a mild solution using the aid of Krasnoselskii's fixed point and contraction mapping theorems. Similar type of problems has been discussed in \cite{WS},\cite{ZLW},\cite{G1},\cite{G2},\cite{G3},\cite{G4},\cite{G5}. \\

Mei et al. \cite{ZDM} considered a FDE with Hilfer derivative of order $1<\gamma<2$ and type $\delta\in [0,1]$ formulated as
\begin{equation}\label{ch6intro1}
D_{0}^{\gamma,\delta}\omega(t)=\mathcal{A}\omega(t),\; t>0
\end{equation}
\begin{equation}\label{ch6intro2}
(g^{(1-\delta)(2-\gamma)}*\omega)(0)=0, (g^{(1-\delta)(2-\gamma)}*\omega)^{'}(0)=y,
\end{equation}
where $\mathcal{A}: D(\mathcal{A}) \subset X \rightarrow X$ is a closed and densely defined linear operator on a Banach space $X$ and $D_{0}^{\gamma,\delta}\omega$ is the Hilfer fractional derivative of order $1<\gamma<2$ and type $\delta \in [0,1]$. They introduced a new family of bounded linear operators ``general fractional sine function" of order $\gamma\in (1,2)$ and type $\delta \in [0,1]$. They showed that these operators are essentially equivalent to a ``general fractional resolvent operator" and used the developed operatic approach  to establish the well-posedness of the \eqref{ch6intro1}-\eqref{ch6intro2}. \\

\indent Motivated by these articles, we consider the following linear
\begin{equation}\label{eq1.5}
D_{0}^{\gamma,\delta}\omega(t)=\mathcal{A}\omega(t),
\end{equation}
\begin{equation}\label{eq1.6}
(g^{(1-\delta)(2-\gamma)}*\omega)(0)=\omega_{1}, (g^{(1-\delta)(2-\gamma)}*\omega)^{'}(0)=\omega_{2},
\end{equation}
 and semilinear fractional differential equation
\begin{equation}\label{eq1.7}
D_{0}^{\gamma,\delta}\omega(t)=\mathcal{A}\omega(t)+\eta(t,\omega(t))
\end{equation}
\begin{equation}\label{eq1.8}
(g^{(1-\delta)(2-\gamma)}*\omega)(0)=\omega_{1}, (g^{(1-\delta)(2-\gamma)}*\omega)^{'}(0)=\omega_{2},
\end{equation}
in the Banach space $X$, where $\mathcal{A}$ is a closed linear operator on $X$.\\

In this paper, we firstly consider a linear fractional differential equation in $\mathbb{R}$ and discuss about the existence of solution. Later we will generalize the properties of $t^{\gamma+\delta(2-\gamma)-2}E_{\gamma,\gamma+\delta(2-\gamma)-1}(ct^{\gamma})$ to give  definition of a family of bounded linear operators, that will be used to study the problems \eqref{eq1.5},\eqref{eq1.6} and \eqref{eq1.7},\eqref{eq1.8}.

\section{Preliminaries}
In this section, we recall some basic definitions related to fractional derivatives and integrals and some results of measure of non-compactness.
\begin{definition}  The Riemann-Liouville fractional integral of order $\gamma>0$ is defined  by
\begin{equation}
J_{t}^{\gamma}\omega(t)=\frac{1}{\Gamma(\gamma)}\int_{0}^{t}(t-s)^{\gamma-1}\omega(s)ds,\; t>0.
\end{equation}
\end{definition}
\begin{definition}
Let $\gamma>0, m=[\gamma]$ and $ \delta \in [0,1]$. The Hilfer fractional derivative of order $\gamma$ and type $\delta$ is defined by
\begin{equation}
D_{0}^{\gamma,\delta}\omega(t)=J_{t}^{\delta(m-\gamma)}\frac{d^{m}}{dt^{m}}J_{t}^{(1-\delta)(m-\gamma)}\omega(t),\; t>0.
\end{equation}
For $\delta=0$, it gives the Riemann-Liouville fractional derivative and for $\delta=1$, it gives the Caputo fractional derivative.
\end{definition}
\subsection{Measure of Non-compactness}
For any bounded subset $\mathfrak{B}$ of the Banach space $X$,  the Hausdorff measure of non-compactness $\psi$ is defined by
\begin{equation}
	\psi(\mathfrak{B})=\inf\bigg\{\epsilon>0: \mathfrak{B}\subset \bigcup_{j=1}^{k}
	\mathcal{B}_{\epsilon}(x_{j})\: \mbox{where}\: x_{j} \in X, k \in
	\mathbb{N}\bigg\},
\end{equation}
where $\mathcal{B}_{\epsilon}(x_{j})$ is a ball of radius $\leq \epsilon$ centered
at $x_{j}, j=1,2,\ldots,k$.\\

The  another measure of noncompactness $\phi$ by introduced Kurtawoski for bounded subset $\mathfrak{B}$
of $X$ is given as
\begin{equation}
	\phi(\mathfrak{B})=\inf\bigg\{\epsilon>0: \mathfrak{B}\subset \bigcup_{j=1}^{k} M_{j}\:
	\mbox{and}\: diam(M_{j})\leq \epsilon \bigg\},
\end{equation}
where $diam(M_{j})$ is the diameter of $M_{j}$ .\\

The well known properties are:-
\begin{enumerate}[label=(\roman*)]
\item  $\psi(\mathfrak{B})=0 \Longleftrightarrow \mathfrak{B}$ is relatively compact in $X$;
	\item For any bounded subsets $\mathfrak{B}_{1}, \mathfrak{B}_{2}$ of $X$, $\psi(\mathfrak{B}_{1}) \leq \psi(\mathfrak{B}_{2})$ if $\mathfrak{B}_{1} \subset
	\mathfrak{B}_{2}$;
	\item $\psi(\{x\}\cup \mathfrak{B})=\psi(\mathfrak{B})$ ; 
	\item $\psi(\mathfrak{B}_{1}\cup \mathfrak{B}_{2}) \leq \max\{ \psi(\mathfrak{B}_{1}),\psi(\mathfrak{B}_{2})\} $;
	\item $\psi(\mathfrak{B}_{1}+\mathfrak{B}_{2}) \leq \psi(\mathfrak{B}_{1})+\psi(\mathfrak{B}_{2}) $, where
	$\mathfrak{B}_{1}+\mathfrak{B}_{2}=\{x+y:x\in \mathfrak{B}_{1}, y\in \mathfrak{B}_{2} \}$;
	\item $\psi(r \mathfrak{B}) \leq |r|\psi(\mathfrak{B}),$  $r \in
	\mathbb{R}$.
\end{enumerate}
For any $ \Phi \subset C(I,X)$, we define
\begin{equation*}
	\Phi(s)=\big\{\omega(s)\in X: \omega \in \Phi \big\} \; \mbox{and}\; \int_{0}^{t}\Phi(s)ds=\bigg\{ \int_{0}^{t}\omega(s)ds: \omega \in
	\Phi\bigg\},\mbox{for}\; t \in I.
\end{equation*}
\begin{prop}\label{prop2.7}\cite{GL}
	If $\Phi \subset C(I,X)$ is equicontinuous and bounded, then $t\rightarrow
	\psi(\Phi(t))$ is continuous on $I$, and
	\begin{equation*}
		\psi\bigg(\int_{0}^{t}\Phi(s)ds\bigg) \leq
		\int_{0}^{t}\psi(\Phi(s))ds \; \mbox{for} \; t\in I , \; \psi(\Phi)=\max_{s\in I}\psi(\Phi(s)).
	\end{equation*}
\end{prop}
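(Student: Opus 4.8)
\medskip
\noindent\emph{Proof strategy.} I would prove the three assertions in the order stated, writing $I=[0,T]$. For the continuity of $t\mapsto\psi(\Phi(t))$, equicontinuity of $\Phi$ gives, for each $\epsilon>0$, a $\rho>0$ with $\|\omega(t)-\omega(t')\|\le\epsilon$ for all $\omega\in\Phi$ whenever $|t-t'|<\rho$; hence $\Phi(t)\subset\Phi(t')+\mathcal{B}_{\epsilon}(0)$, and properties (ii), (v) of $\psi$ yield $\psi(\Phi(t))\le\psi(\Phi(t'))+\epsilon$. Interchanging $t$ and $t'$ gives $|\psi(\Phi(t))-\psi(\Phi(t'))|\le\epsilon$ for $|t-t'|<\rho$, so $s\mapsto\psi(\Phi(s))$ is uniformly continuous on $I$; in particular it is integrable on $[0,t]$ and attains its maximum on the compact set $I$, so all quantities in the statement make sense.

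For the integral inequality, fix $t\in I$ and $\epsilon>0$, take $\rho$ as above, and choose a partition $0=s_{0}<\cdots<s_{n}=t$ with mesh $<\rho$ that is in addition fine enough that the Riemann sum $\sum_{i=1}^{n}(s_{i}-s_{i-1})\psi(\Phi(s_{i-1}))$ differs from $\int_{0}^{t}\psi(\Phi(s))\,ds$ by at most $\epsilon$. For every $\omega\in\Phi$, $\|\int_{s_{i-1}}^{s_{i}}\omega(s)\,ds-(s_{i}-s_{i-1})\omega(s_{i-1})\|\le(s_{i}-s_{i-1})\epsilon$, and summing over $i$ yields the inclusion
\[
\int_{0}^{t}\Phi(s)\,ds\ \subset\ \sum_{i=1}^{n}(s_{i}-s_{i-1})\,\Phi(s_{i-1})\ +\ \mathcal{B}_{t\epsilon}(0).
\]
Applying properties (ii), (v), (vi) of $\psi$ then bounds $\psi\!\left(\int_{0}^{t}\Phi(s)\,ds\right)$ by $\sum_{i=1}^{n}(s_{i}-s_{i-1})\psi(\Phi(s_{i-1}))+t\epsilon\le\int_{0}^{t}\psi(\Phi(s))\,ds+(1+t)\epsilon$, and letting $\epsilon\downarrow0$ gives the estimate.

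For the identity $\psi(\Phi)=\max_{s\in I}\psi(\Phi(s))$, one inequality is immediate: for fixed $s$ the evaluation $\omega\mapsto\omega(s)$ is $1$-Lipschitz from $C(I,X)$ into $X$, so $\psi(\Phi(s))\le\psi(\Phi)$, and the maximum over $s$ exists by the first part. For the reverse, set $c:=\max_{s\in I}\psi(\Phi(s))$, fix $\epsilon>0$, and use equicontinuity to pick a partition $0=t_{0}<\cdots<t_{n}=T$ so fine that $\|\omega(t)-\omega(t_{i-1})\|\le\epsilon$ for all $t\in[t_{i-1},t_{i}]$ and all $\omega\in\Phi$. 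Since each $\Phi(t_{i})$ has Hausdorff measure $\le c$, the set $\{(\omega(t_{0}),\dots,\omega(t_{n})):\omega\in\Phi\}\subset X^{n+1}$ (with the max-norm) admits a finite $(c+\epsilon)$-net $\{(y^{(k)}_{0},\dots,y^{(k)}_{n})\}_{k=1}^{m}$; let $g_{k}\in C(I,X)$ be the piecewise-linear interpolant through the nodes $(t_{i},y^{(k)}_{i})$. Given $\omega\in\Phi$, picking $k$ with $\|\omega(t_{i})-y^{(k)}_{i}\|\le c+\epsilon$ for all $i$ and estimating on each subinterval by convexity shows $\|\omega(t)-g_{k}(t)\|\le c+2\epsilon$ for all $t\in I$. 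Hence $\{g_{k}\}_{k=1}^{m}$ is a finite $(c+2\epsilon)$-net for $\Phi$ in $C(I,X)$, so $\psi(\Phi)\le c+2\epsilon$, and $\epsilon\downarrow0$ finishes the argument.

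I expect the last inequality, $\psi(\Phi)\le\max_{s\in I}\psi(\Phi(s))$, to be the main obstacle: it requires manufacturing a finite net for $\Phi$ inside the function space $C(I,X)$ out of nets at only finitely many sample points, and equicontinuity is exactly the ingredient that lets the interpolants $g_{k}$ control each $\omega\in\Phi$ uniformly over the whole of $I$. The first two parts are routine bookkeeping with the monotonicity, subadditivity and positive-homogeneity of $\psi$, once equicontinuity is invoked to pass between $\Phi(t)$ and $\Phi(t')$ and to approximate the $X$-valued integral by Riemann sums.
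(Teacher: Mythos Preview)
Your argument is correct in all three parts. However, the paper does not actually prove this proposition: it is stated with a citation to \cite{GL} (Guo, Lakshmikantham, Liu, \emph{Nonlinear integral equations in abstract spaces}) and treated as a known result from the literature, with no proof supplied. So there is nothing to compare against; you have written out a self-contained proof of a result the paper merely quotes. Your approach---equicontinuity to control $|\psi(\Phi(t))-\psi(\Phi(t'))|$, Riemann-sum approximation for the integral estimate, and an Arzel\`a--Ascoli-style interpolation for the identity $\psi(\Phi)=\max_{s}\psi(\Phi(s))$---is the standard one and matches what one finds in the cited reference.
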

\begin{prop}\label{prop2.8}\cite{HM}
Let $\{\omega_{n}:I \rightarrow X\}$ be a sequence of Bochner integrable functions  with  $\left\| \omega_{n}(t)\right\|\leq m(t)$ a.e. $t\in I$ and every $n\geq 1$, where $m\in
	L^{1}(I,\mathbb{R}^{+})$. Then  the function
	$\phi(t)=\psi(\{\omega_{n}(t)\}_{n=1}^{\infty})$ belongs to
	$L^{1}(I,\mathbb{R}^{+})$ and 
	\begin{equation*}
		\psi\bigg(\bigg\{\int_{0}^{t}\omega_{n}(\tau)d\tau:n\geq 1\bigg\}\bigg) \leq 2
		\int_{0}^{t}\phi(\tau)d\tau.
	\end{equation*}
\end{prop}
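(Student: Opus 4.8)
\emph{Proof proposal.} The plan is to dispose of the integrability of $\phi$ first, which is routine, and then to concentrate on the integral inequality, which is the substantive assertion (it is the vector‑valued version of Heinz's lemma, \cite{HM}).

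\textbf{Integrability of $\phi$.} Each $\omega_n$ is Bochner integrable, hence strongly measurable, so after deleting a null set all the $\omega_n$ take values in one separable subspace $X_0\subseteq X$. Choosing a countable dense set $\{z_j\}_{j\ge1}$ in $X_0$, the definition of $\psi$ gives
\[
\phi(t)=\psi\big(\{\omega_n(t):n\ge1\}\big)=\inf_{p\ge1}\ \inf_{j_1,\dots,j_p\ge1}\ \sup_{n\ge1}\ \min_{1\le i\le p}\|\omega_n(t)-z_{j_i}\|,
\]
that is, a countable infimum of measurable functions of $t$, so $\phi$ is measurable; and since $\{\omega_n(t):n\ge1\}$ is contained in the closed ball of radius $m(t)$ about $0$, property (ii) together with the trivial covering of that ball by itself yields $\phi(t)\le m(t)$ a.e., whence $\phi\in L^1(I,\mathbb R^+)$.

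\textbf{The inequality.} First I would reduce to $m\in L^\infty(I)$: splitting $\omega_n=\omega_n\mathbf{1}_{\{m\le R\}}+\omega_n\mathbf{1}_{\{m>R\}}$, the set of integrals of the second summand lies in the ball of radius $\int_{\{m>R\}}m\,d\tau$, which tends to $0$ as $R\to\infty$ and so is negligible for $\psi$; thus it suffices to estimate $\psi$ of the integrals of the truncated (now essentially bounded) functions and let $R\to\infty$. With $m$ bounded I would partition $[0,t]$ into finitely many small intervals $J_1,\dots,J_N$ and invoke the mean value property of the Bochner integral, $\int_{J_k}\omega_n\in|J_k|\,\overline{\operatorname{co}}\,\{\omega_n(\sigma):\sigma\in J_k\}$. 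Combining $\int_0^t\omega_n=\sum_{k}\int_{J_k}\omega_n$ with properties (v), (vi) and the standard invariance $\psi(\overline{\operatorname{co}}\,A)=\psi(A)$ gives
\[
\psi\Big(\Big\{\textstyle\int_0^t\omega_n(\tau)\,d\tau:n\ge1\Big\}\Big)\ \le\ \sum_{k=1}^{N}|J_k|\,\psi\big(\{\omega_n(\sigma):\sigma\in J_k,\ n\ge1\}\big).
\]
It then remains to let $N\to\infty$ and bound the right‑hand side by $2\int_0^t\phi(\tau)\,d\tau+\varepsilon$; for this one uses strong measurability to approximate, on each small interval $J_k$, the whole sequence $(\omega_n(\cdot))_n$ by a sequence attaining only finitely many values up to an error controlled by the pointwise $\psi$, and then sums the resulting local estimates — it is precisely the loss incurred in this simultaneous approximation that produces the constant $2$.

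\textbf{Main obstacle.} The difficulty sits entirely in that last step. In contrast to Proposition~\ref{prop2.7}, the family $\{\omega_n\}$ is not assumed equicontinuous, so the quantity $\psi(\{\omega_n(\sigma):\sigma\in J_k,\ n\ge1\})$ appearing above is \emph{not} dominated by $\sup_{\sigma\in J_k}\phi(\sigma)$ in any naive way: the essential range of a single merely Bochner integrable $\omega_n$ over a tiny interval need not be small, and the obvious estimates fail to be uniform in $n$. Taming this non‑uniformity — approximating the entire sequence interval by interval while keeping exact track of how the local measures of noncompactness accumulate — is the heart of the matter (this is Heinz's lemma, \cite{HM}), and it is exactly the reason the right‑hand side carries the factor $2$ rather than $1$.
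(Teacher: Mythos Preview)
The paper does not prove this proposition at all: it is stated with the citation \cite{HM} and used as a black box, so there is no proof in the paper to compare your attempt against.

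As for your sketch itself: the integrability part is fine, and the overall architecture for the inequality (reduce to bounded dominator, partition, mean value inclusion, sum local $\psi$-estimates) is the standard route to Heinz's lemma. But you have not actually closed the argument --- you correctly identify the obstacle (passing from $\psi(\{\omega_n(\sigma):\sigma\in J_k,\ n\ge1\})$ to something controlled by $\phi$ on $J_k$ without equicontinuity) and then defer to ``simultaneous approximation'' without carrying it out. That step is exactly the content of the lemma; invoking it by name is circular. If you want a self-contained proof, you must exhibit, for each $\varepsilon>0$ and each small interval $J_k$, a finite set in $X$ that $\varepsilon$-approximates $\{\omega_n(\sigma)\}$ uniformly in $n$ up to a controlled residual, and track how the residuals sum. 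Until that is written down, what you have is an outline rather than a proof.
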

\begin{prop}\label{prop2.9}\cite{DB}
	If $\Phi$ is bounded, then for $\epsilon>0$, there is a sequence
	$\{w_{n}\}_{n=1}^{\infty}\subset \Phi$ satisfying
	\begin{equation*}
		\psi(\Phi)\leq 2\psi(\{w_{n}\}_{n=1}^{\infty})+\epsilon.
	\end{equation*}
\end{prop}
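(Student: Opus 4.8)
The plan is to establish the (slightly sharper) bound $\psi(\Phi)\le 2\psi(\{w_n\})+\tfrac{\epsilon}{2}$ by a direct separation argument, using nothing beyond the definition of $\psi$. Write $\chi=\psi(\Phi)$ and fix $\epsilon>0$. If $\chi\le \epsilon/2$ the inequality holds trivially for any constant sequence $w_n\equiv w\in\Phi$, since then $\psi(\{w_n\})=0$; so assume $\chi>\epsilon/2$ and put $d=\chi-\tfrac{\epsilon}{2}>0$.

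The main step is a greedy extraction of an infinite $d$-separated sequence inside $\Phi$. Pick $w_1\in\Phi$ arbitrarily, and suppose $w_1,\dots,w_n\in\Phi$ have been chosen. The set $\Phi$ cannot be contained in $\bigcup_{i=1}^{n}\{x\in X:\|x-w_i\|\le d\}$: otherwise $w_1,\dots,w_n$ would be centres of a finite cover of $\Phi$ by balls of radius $d$, forcing $\psi(\Phi)\le d<\chi$, a contradiction. Hence there exists $w_{n+1}\in\Phi$ with $\|w_{n+1}-w_i\|>d$ for every $i\le n$, and iterating this step produces a sequence $\{w_n\}_{n=1}^{\infty}\subset\Phi$ with $\|w_i-w_j\|>d$ whenever $i\ne j$.

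It then remains to bound $\psi(\{w_n\})$ from below. If a ball of radius $r<d/2$ contained two distinct terms $w_i,w_j$, we would have $\|w_i-w_j\|\le 2r<d$, contradicting the $d$-separation; so each such ball contains at most one term, and the infinite set $\{w_n\}$ admits no finite cover by balls of radius $<d/2$. Therefore $\psi(\{w_n\})\ge d/2$, which rearranges to $\chi\le 2\psi(\{w_n\})+\tfrac{\epsilon}{2}\le 2\psi(\{w_n\})+\epsilon$, the desired estimate.

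The only point requiring care is that the greedy construction never terminates, and this is exactly where the hypothesis $\psi(\Phi)=\chi$ (rather than merely $\psi(\Phi)\le\chi$) is used: at every stage the impossibility of covering $\Phi$ by $n$ balls of radius $\chi-\tfrac{\epsilon}{2}$ is precisely what supplies the next well-separated point. Notably, no completeness, separability, or compactness of $\Phi$ is invoked, Propositions~\ref{prop2.7}--\ref{prop2.8} are not needed, and the constant $2$ is the unavoidable loss incurred in passing from a separated family to its Hausdorff measure of noncompactness.
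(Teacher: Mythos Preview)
The paper does not supply a proof of this proposition; it is quoted as a known result from Bothe~\cite{DB}. Your argument is a correct and self-contained proof: the greedy extraction of a $d$-separated sequence (with $d=\psi(\Phi)-\epsilon/2$) works exactly as you describe, the key point being that $\psi(\Phi)>d$ blocks any finite cover of $\Phi$ by balls of radius $d$ and so guarantees a fresh point at every stage, and the lower bound $\psi(\{w_n\})\ge d/2$ follows immediately from the pigeonhole observation that a ball of radius $<d/2$ meets at most one term of a $d$-separated set. Your proof is in fact the standard one and even yields the slightly sharper constant $\epsilon/2$ on the right-hand side.
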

\section{Linear Problem}
We consider a real valued linear FDE
\begin{equation}\label{eq3.1}
D_{0}^{\gamma,\delta}\omega(t)=c\omega(t)+\eta(t),\; t>0
\end{equation}
\begin{equation}\label{eq3.2}
(g_{(1-\delta)(2-\gamma)}*\omega)(0)=x, (g_{(1-\delta)(2-\gamma)}*\omega)^{'}(0)=y,
\end{equation}
where $c$ is a real constant, $\eta:[0,T]\rightarrow \mathbb{R}$ and $g_{\gamma}(t)=\frac{t^{\gamma}-1}{\Gamma(\gamma)}, \gamma >0$.\\
The next two Lemmas give the insight about the structure of solution.
\begin{lemma}\label{lemma1}
If $f\in C^{1}([0,T],\mathbb{R})$, then $$\omega(t)=t^{\gamma+\delta(2-\gamma)-2}E_{\gamma,\gamma+\delta(2-\gamma)-1}(ct^{\gamma})x+t^{\gamma+\delta(2-\gamma)-1}E_{\gamma,\gamma+\delta(2-\gamma)}(ct^{\gamma})y+\int_{0}^{t}(t-s)^{\gamma-1}E_{\gamma,\gamma}(c(t-s)^{\gamma})\eta(s)ds$$ is  solution of the problem \eqref{eq3.1}-\eqref{eq3.2}.
\end{lemma}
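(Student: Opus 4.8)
The plan is to obtain the representation by the Laplace transform and then record what is needed to make the argument rigorous. Throughout set $m=[\gamma]=2$ and abbreviate $\mu:=(1-\delta)(2-\gamma)$ and $\beta:=\gamma+\delta(2-\gamma)-1$; note the arithmetic identities $\mu+\beta=1$, $1-\delta(2-\gamma)=\gamma-\beta$ and $-\delta(2-\gamma)=\gamma-\beta-1$, which is where the precise ML indices in the statement come from. Since $D_{0}^{\gamma,\delta}\omega=J_{t}^{\delta(2-\gamma)}\frac{d^{2}}{dt^{2}}J_{t}^{\mu}\omega$ and $g_{(1-\delta)(2-\gamma)}*\omega=J_{t}^{\mu}\omega$, the elementary rule $\mathcal{L}\{J_{t}^{\alpha}f\}(s)=s^{-\alpha}\hat f(s)$ together with the classical transform of a second derivative gives
\[
\mathcal{L}\{D_{0}^{\gamma,\delta}\omega\}(s)=s^{\gamma}\hat\omega(s)-s^{\gamma-\beta}x-s^{\gamma-\beta-1}y,
\]
where the two boundary terms are exactly the data prescribed in \eqref{eq3.2}. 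Transforming \eqref{eq3.1} then yields the algebraic relation $(s^{\gamma}-c)\hat\omega(s)=s^{\gamma-\beta}x+s^{\gamma-\beta-1}y+\hat\eta(s)$.

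Next I would invert term by term. Using the standard pair $\mathcal{L}\{t^{b-1}E_{\gamma,b}(ct^{\gamma})\}(s)=\dfrac{s^{\gamma-b}}{s^{\gamma}-c}$ with $b=\beta$, $b=\beta+1$, $b=\gamma$, and the convolution theorem on the last summand, one recovers precisely the claimed $\omega$, since $\beta-1=\gamma+\delta(2-\gamma)-2$ and $\beta=\gamma+\delta(2-\gamma)-1$. It then remains to check \eqref{eq3.2} directly. Because $\mu+\beta=1$, the semigroup property $J_{t}^{\mu}[t^{b-1}E_{\gamma,b}(ct^{\gamma})]=t^{b+\mu-1}E_{\gamma,b+\mu}(ct^{\gamma})$ gives
\[
J_{t}^{\mu}\omega(t)=x\,E_{\gamma,1}(ct^{\gamma})+y\,tE_{\gamma,2}(ct^{\gamma})+\big((J_{t}^{\mu}k)*\eta\big)(t),\qquad k(t)=t^{\gamma-1}E_{\gamma,\gamma}(ct^{\gamma});
\]
evaluating at $t=0$ gives $x$, and differentiating once — using $\frac{d}{dt}\big(t^{b-1}E_{\gamma,b}(ct^{\gamma})\big)=t^{b-2}E_{\gamma,b-1}(ct^{\gamma})$, the identity $E_{\gamma,0}(z)=zE_{\gamma,\gamma}(z)$, and $\gamma>1$ to kill the boundary contributions of the convolution — gives $y$.

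The hard part will be making the Laplace computation rigorous on the finite interval $[0,T]$: either one extends $\eta$ to $[0,\infty)$ as a function of exponential order and invokes uniqueness of Laplace transforms, or — the route I expect to be the real technical heart — one substitutes the candidate $\omega=\omega_{1}+\omega_{2}+(k*\eta)$ directly into $J_{t}^{\delta(2-\gamma)}\frac{d^{2}}{dt^{2}}J_{t}^{\mu}\omega$. The two non-integral summands $\omega_{1},\omega_{2}$ are routine: the differentiation rule lowers the second ML index twice, $J_{t}^{\delta(2-\gamma)}$ restores it, and the recursion $E_{\gamma,b}(z)=\frac{1}{\Gamma(b)}+zE_{\gamma,\gamma+b}(z)$ produces the factor $c$, so that $D_{0}^{\gamma,\delta}\omega_{i}=c\,\omega_{i}$. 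The delicate term is $(k*\eta)$: after applying $J_{t}^{\mu}$ one must differentiate $(J_{t}^{\mu}k)*\eta$ twice, and $\frac{d}{dt}J_{t}^{\mu}k(t)=t^{\gamma+\mu-2}E_{\gamma,\gamma+\mu-1}(ct^{\gamma})$ is unbounded near $0$ when $\delta$ is close to $1$. I would handle this by using $\eta\in C^{1}$ to transfer one derivative onto $\eta$ inside the convolution, and by splitting $\frac{d}{dt}J_{t}^{\mu}k=g_{\gamma+\mu-1}+c\,(\text{locally }C^{1}\text{ kernel})$ via the ML recursion, so that the outer $\frac{d^{2}}{dt^{2}}$ ultimately acts only on a pure fractional-integral kernel and on genuinely smooth terms; the identity $D_{0}^{\gamma,\delta}(k*\eta)=c\,(k*\eta)+\eta$ then falls out, which together with the previous paragraph establishes \eqref{eq3.1}.
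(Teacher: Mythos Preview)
Your proposal is correct and the substantive verification is essentially the paper's own argument: decompose $\omega$ into the two Mittag--Leffler terms and the convolution $k*\eta$, use the ML integration/differentiation rules to check $D_{0}^{\gamma,\delta}\omega_i=c\omega_i$, and for the convolution term pass one derivative onto $\eta$ via the $C^{1}$ hypothesis (the paper writes the first derivative as $\int_0^t u^{-\delta(2-\gamma)}E_{\gamma,1-\delta(2-\gamma)}(cu^{\gamma})\eta(t-u)\,du$ and then differentiates in $t$ to produce $\eta'$ plus the boundary term, exactly your ``transfer one derivative'' step), after which $J_t^{\delta(2-\gamma)}$ recovers $\eta+c(k*\eta)$; the initial conditions are checked via the same $J_t^{\mu}$ computation you outline. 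The only cosmetic difference is that the paper skips your Laplace-transform derivation and goes straight to the direct substitution.
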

\begin{proof}
	We define\\ $\omega_{1}(t)=t^{\gamma+\delta(2-\gamma)-2}E_{\gamma,\gamma+\delta(2-\gamma)-1}(ct^{\gamma})x+t^{\gamma+\delta(2-\gamma)-1}E_{\gamma,\gamma+\delta(2-\gamma)}(ct^{\gamma})y$ and\\ $\omega_{2}(t)=\int_{0}^{t}(t-s)^{\gamma-1}E_{\gamma,\gamma}(c(t-s)^{\gamma})\eta(s)ds.$\\ Then
\begin{eqnarray*}
&&J_{t}^{(1-\delta)(2-\gamma)}\omega_{2}(t)\\
&&=\frac{1}{\Gamma((1-\delta)(2-\gamma))}\int_{0}^{t}(t-\tau)^{(1-\delta)(2-\gamma)-1}\int_{0}^{\tau}(\tau-s)^{\gamma-1}E_{\gamma,\gamma}(c(\tau-s)^{\gamma})\eta(s)ds d\tau\\	
	&&= \frac{1}{\Gamma((1-\delta)(2-\gamma))}\int_{0}^{t}\eta(s)\int_{s}^{t}(t-\tau)^{(1-\delta)(2-\gamma)-1}(\tau-s)^{\gamma-1}E_{\gamma,\gamma}(c(\tau-s)^{\gamma})d\tau ds\\
	&&= \int_{0}^{t}(t-s)^{1-\delta(2-\gamma)}E_{\gamma,2-\delta(2-\gamma)}(c(t-s)^{\gamma})\eta(s)ds.
\end{eqnarray*}
Hence
	\begin{eqnarray}\label{eq3.3}
	&&\frac{d}{dt}J_{t}^{(1-\delta)(2-\gamma)}\omega_{2}(t)=\frac{d}{dt}\int_{0}^{t}(t-s)^{1-\delta(2-\gamma)}E_{\gamma,2-\delta(2-\gamma)}(c(t-s)^{\gamma})\eta(s)ds \nonumber\\&&=\int_{0}^{t}(t-s)^{-\delta(2-\gamma)}E_{\gamma,1-\delta(2-\gamma)}(c(t-s)^{\gamma})\eta(s)ds \nonumber\\&&=\int_{0}^{t}u^{-\delta(2-\gamma)}E_{\gamma,1-\delta(2-\gamma)}(cu^{\gamma})\eta(t-u)du.
	\end{eqnarray}
Now using \eqref{eq3.3}, we have
\begin{eqnarray*}
&&\frac{d^{2}}{dt^{2}}J_{t}^{(1-\delta)(2-\gamma)}\omega_{2}(t)=\frac{d}{dt}\int_{0}^{t}u^{-\delta(2-\gamma)}E_{\gamma,1-\delta(2-\gamma)}(cu^{\gamma})\eta(t-u)du\\ 
&&=\int_{0}^{t}u^{-\delta(2-\gamma)}E_{\gamma,1-\delta(2-\gamma)}(cu^{\gamma})f^{\prime}(t-u)du+t^{-\delta(2-\gamma)}E_{\gamma,1-\delta(2-\gamma)}(ct^{\gamma})\eta(0)\\
&&=\int_{0}^{t}(t-u)^{-\delta(2-\gamma)}E_{\gamma,1-\delta(2-\gamma)}(c(t-u)^{\gamma})f^{\prime}(\omega)du+t^{-\delta(2-\gamma)}E_{\gamma,1-\delta(2-\gamma)}(ct^{\gamma})\eta(0).
\end{eqnarray*}
Then as per definition
\begin{eqnarray*}
&&D_{0}^{\gamma,\delta}\omega_{2}(t)=J_{t}^{\delta(2-\gamma)}\frac{d^{2}}{dt^{2}}J_{t}^{(1-\delta)(2-\gamma)}\omega_{2}(t)\\
&&=\eta(t)-E_{\gamma}(ct^{\gamma})\eta(0)+c\int_{0}^{t}(t-u)^{\gamma-1}E_{\gamma,\gamma}(c(t-u)^{\gamma})\eta(\omega)du+E_{\gamma}(ct^{\gamma})\eta(0)\\
&&=\eta(t)+c\int_{0}^{t}(t-u)^{\gamma-1}E_{\gamma,\gamma}(c(t-u)^{\gamma})\eta(\omega)du.
\end{eqnarray*}
Hence,
\begin{eqnarray*}
D_{0}^{\gamma,\delta}\omega(t)&=& D_{0}^{\gamma,\delta}\omega_{1}(t)+D_{0}^{\gamma,\delta}\omega_{2}(t)\\&=&c(t^{\gamma+\delta(2-\gamma)-2}E_{\gamma,\gamma+\delta(2-\gamma)-1}(ct^{\gamma})x+t^{\gamma+\delta(2-\gamma)-1}E_{\gamma,\gamma+\delta(2-\gamma)}(ct^{\gamma})y\\&+&c\int_{0}^{t}(t-u)^{\gamma-1}E_{\gamma,\gamma}(c(t-u)^{\gamma})\eta(\omega)du)+\eta(t)=c\omega(t)+\eta(t), t \in (0,T].
\end{eqnarray*}
Clearly,
\begin{eqnarray*}
&&J_{t}^{(1-\delta)(2-\gamma)}\omega(t)\\
&&=E_{\gamma,1}(ct^{\gamma})x+t E_{\gamma,2}(c t^{\gamma})y+\int_{0}^{t}(t-s)^{1-\delta(2-\gamma)}E_{\gamma,2-\delta(2-\gamma)}(c(t-s)^{\gamma})\eta(s)ds \rightarrow x
\end{eqnarray*}
and
\begin{eqnarray*}
&&\frac{d}{dt}J_{t}^{(1-\delta)(2-\gamma)}\omega(t)\\
&&=ct^{\gamma-1}E_{\gamma,\gamma}(ct^{\gamma})x+E_{\gamma,1}(ct^{\gamma})y+\int_{0}^{t}(t-s)^{-\delta(2-\gamma)}E_{\gamma,1-\delta(2-\gamma)}(c(t-s)^{\gamma})\eta(s)ds \rightarrow y
\end{eqnarray*}

as  $t \rightarrow 0,$ by dominated convergence theorem.\\
Hence $\omega(t)=t^{\gamma+\delta(2-\gamma)-2}E_{\gamma,\gamma+\delta(2-\gamma)-1}(ct^{\gamma})x+t^{\gamma+\delta(2-\gamma)-1}E_{\gamma,\gamma+\delta(2-\gamma)}(ct^{\gamma})y+\int_{0}^{t}(t-s)^{\gamma-1}E_{\gamma,\gamma}(c(t-s)^{\gamma})\eta(s)ds$ is the solution of the problem \eqref{eq3.1}-\eqref{eq3.2}. 
\end{proof}
\begin{lemma}
If $\eta\in J^{\delta(2-\gamma)}(L^{1})$, then 	$$\omega(t)=t^{\gamma+\delta(2-\gamma)-2}E_{\gamma,\gamma+\delta(2-\gamma)-1}(ct^{\gamma})x+t^{\gamma+\delta(2-\gamma)-1}E_{\gamma,\gamma+\delta(2-\gamma)}(ct^{\gamma})y+\int_{0}^{t}(t-s)^{\gamma-1}E_{\gamma,\gamma}(c(t-s)^{\gamma})\eta(s)ds$$ is the solution of  \eqref{eq3.1}-\eqref{eq3.2}.
\end{lemma}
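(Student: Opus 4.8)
The plan is to carry out the verification directly, pushing everything into the convolution algebra of locally integrable functions on $[0,T]$ and using the semigroup law $g_{\alpha}*g_{\beta}=g_{\alpha+\beta}$ for the power kernels $g_{\alpha}(t)=t^{\alpha-1}/\Gamma(\alpha)$, so that $J_{t}^{\alpha}\phi=g_{\alpha}*\phi$. Since $\eta\in J^{\delta(2-\gamma)}(L^{1})$, I would first fix $h\in L^{1}([0,T],\mathbb{R})$ with $\eta=g_{\delta(2-\gamma)}*h$ and write $K(t)=t^{\gamma-1}E_{\gamma,\gamma}(ct^{\gamma})=\sum_{k\ge 0}c^{k}g_{\gamma(k+1)}(t)$, so that the forcing part of the candidate solution is the convolution $\omega_{2}:=K*\eta=K*g_{\delta(2-\gamma)}*h$ (legitimate since $K\in L^{1}_{\mathrm{loc}}$ and $\eta\in L^{1}$). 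I split $\omega=\omega_{1}+\omega_{2}$ with $\omega_{1}$ the part carrying $x$ and $y$, exactly as in the proof of Lemma~\ref{lemma1}: $\omega_{1}$ is untouched by the weakened hypothesis on $\eta$, so that computation already delivers $D_{0}^{\gamma,\delta}\omega_{1}=c\,\omega_{1}$ together with $J_{t}^{(1-\delta)(2-\gamma)}\omega_{1}(t)\to x$ and $(J_{t}^{(1-\delta)(2-\gamma)}\omega_{1})'(t)\to y$ as $t\to 0^{+}$; only the $\omega_{2}$ part must be redone.

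The central observation for $\omega_{2}$ is that the hidden integral of order $\delta(2-\gamma)$ sitting inside $\eta$ merges with the outer $J^{(1-\delta)(2-\gamma)}$ coming from the definition of $D_{0}^{\gamma,\delta}$: by associativity of convolution and $g_{(1-\delta)(2-\gamma)}*g_{\delta(2-\gamma)}=g_{2-\gamma}$,
\[
J_{t}^{(1-\delta)(2-\gamma)}\omega_{2}=g_{(1-\delta)(2-\gamma)}*K*g_{\delta(2-\gamma)}*h=g_{2-\gamma}*(K*h)=J_{t}^{2-\gamma}(K*h).
\]
Expanding $K$ termwise, $J_{t}^{2-\gamma}(K*h)=\sum_{k\ge 0}c^{k}J_{t}^{\gamma k+2}h$, where every exponent is $\ge 2$; I would then differentiate twice term by term, using that $J_{t}^{\alpha}$ maps $L^{1}$ into $C^{1}$ with $\frac{d}{dt}J_{t}^{\alpha}h=J_{t}^{\alpha-1}h$ for $\alpha\ge 1$, to get
\[
\frac{d^{2}}{dt^{2}}J_{t}^{(1-\delta)(2-\gamma)}\omega_{2}=\sum_{k\ge 0}c^{k}J_{t}^{\gamma k}h=h+c\sum_{j\ge 0}c^{j}J_{t}^{\gamma(j+1)}h=h+c\,(K*h)
\]
(the $k=0$ term being $h$ a.e.). Applying $J_{t}^{\delta(2-\gamma)}$ and recognising $g_{\delta(2-\gamma)}*(K*h)=K*\eta=\omega_{2}$ yields $D_{0}^{\gamma,\delta}\omega_{2}=\eta+c\,\omega_{2}$, hence $D_{0}^{\gamma,\delta}\omega=c\,\omega+\eta$ on $(0,T]$.

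For the initial conditions I would use the same identities to write $J_{t}^{(1-\delta)(2-\gamma)}\omega(t)=\bigl(E_{\gamma,1}(ct^{\gamma})x+tE_{\gamma,2}(ct^{\gamma})y\bigr)+\sum_{k\ge 0}c^{k}J_{t}^{\gamma k+2}h(t)$ and, after one differentiation, $\frac{d}{dt}J_{t}^{(1-\delta)(2-\gamma)}\omega(t)=\bigl(ct^{\gamma-1}E_{\gamma,\gamma}(ct^{\gamma})x+E_{\gamma,1}(ct^{\gamma})y\bigr)+\sum_{k\ge 0}c^{k}J_{t}^{\gamma k+1}h(t)$. Since $\lvert J_{t}^{\alpha}h(t)\rvert\le \frac{t^{\alpha-1}}{\Gamma(\alpha)}\lVert h\rVert_{L^{1}}\to 0$ as $t\to 0^{+}$ for every $\alpha\ge 1$, and all the exponents $\gamma k+2$, $\gamma k+1$ are $\ge 1$, both series vanish in the limit; the bracketed terms tend to $x$ and to $y$ respectively, using $\gamma>1$ to kill $ct^{\gamma-1}E_{\gamma,\gamma}(ct^{\gamma})x$ and $E_{\gamma,1}(0)=1$. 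This gives the two required limits and completes the verification that $\omega$ solves \eqref{eq3.1}--\eqref{eq3.2}.

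The hard part will be making the term-by-term double differentiation $\frac{d^{2}}{dt^{2}}\sum_{k}c^{k}J_{t}^{\gamma k+2}h=\sum_{k}c^{k}J_{t}^{\gamma k}h$ genuinely rigorous under the sole hypothesis $h\in L^{1}$: one must check that each partial sum is honestly differentiable (twice, the second time only a.e.\ because of the $k=0$ term) with the claimed derivatives — this rests on $\gamma k+2\ge 2$ and the mapping behaviour of the Riemann--Liouville integral on $L^{1}$ — and that the Mittag--Leffler type bounds make the differentiated series converge uniformly on $[0,T]$, so that the interchange of $\frac{d^{2}}{dt^{2}}$ with the infinite sum is licit; once this is settled, the convolution-algebra manipulations and the $t\to 0^{+}$ estimates are routine. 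A less direct alternative would be to approximate $h$ in $L^{1}$ by $C^{1}$ functions vanishing at $0$, apply Lemma~\ref{lemma1} to the resulting smooth forcings $\eta_{n}=J_{t}^{\delta(2-\gamma)}h_{n}$, and pass to the limit — but the direct argument above sidesteps the regularity bookkeeping for the $\eta_{n}$.
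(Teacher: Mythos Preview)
Your argument is correct and takes a genuinely different route from the paper's. The paper keeps everything in Mittag--Leffler form: it recalls from Lemma~\ref{lemma1} that $\frac{d}{dt}J_{t}^{(1-\delta)(2-\gamma)}\omega_{2}$ equals the convolution $I=\int_0^t (t-s)^{-\delta(2-\gamma)}E_{\gamma,1-\delta(2-\gamma)}(c(t-s)^{\gamma})\eta(s)\,ds$, then invokes an external Mittag--Leffler identity (Theorem~5.1 of \cite{haubold2011mittag}) to split $I$ as a ``nice'' convolution plus $J_{t}^{1-\delta(2-\gamma)}\eta$; the hypothesis $\eta=J^{\delta(2-\gamma)}\phi$ is used only to turn this last piece into $J_{t}^{1}\phi$, whose derivative is $\phi$, and a final $J_{t}^{\delta(2-\gamma)}$ reassembles $\eta+c\omega_{2}$. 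Your approach instead expands $K=\sum_{k}c^{k}g_{\gamma(k+1)}$ and works entirely in the convolution algebra of the kernels $g_{\alpha}$, which makes the key cancellation $g_{(1-\delta)(2-\gamma)}*g_{\delta(2-\gamma)}=g_{2-\gamma}$ completely transparent: the $\delta$-dependence of the Hilfer derivative is absorbed by the $\delta$-dependence of the assumed regularity of $\eta$, and one lands on the $\delta$-free object $\sum_{k}c^{k}J_{t}^{\gamma k+2}h$. This is more elementary (no external identity needed) and arguably more illuminating, at the cost of the bookkeeping you flag for term-by-term differentiation. Two small points: your estimate $\lvert J_{t}^{\alpha}h(t)\rvert\le t^{\alpha-1}\lVert h\rVert_{L^{1}}/\Gamma(\alpha)$ does not vanish when $\alpha=1$, so for the $k=0$ term of $\sum_{k}c^{k}J_{t}^{\gamma k+1}h$ you should instead appeal directly to $\int_{0}^{t}\lvert h\rvert\to 0$; and note that the paper's proof of Lemma~2 does not re-verify the initial conditions at all, whereas you do --- your treatment is more complete on that score.
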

\begin{proof}
Similarly as in Lemma \ref{lemma1}, define $$\omega_{1}(t)=t^{\gamma+\delta(2-\gamma)-2}E_{\gamma,\gamma+\delta(2-\gamma)-1}(ct^{\gamma})x+t^{\gamma+\delta(2-\gamma)-1}E_{\gamma,\gamma+\delta(2-\gamma)}(ct^{\gamma})y$$ and $$\omega_{2}(t)=\int_{0}^{t}(t-s)^{\gamma-1}E_{\gamma,\gamma}(c(t-s)^{\gamma})\eta(s)ds.$$ Then using \eqref{eq3.3}, we have
\begin{eqnarray*}
D_{t}^{\gamma,\delta}\omega_{2}(t)&=&J_{t}^{\delta(2-\gamma)}\frac{d}{dt}\int_{0}^{t}(t-s)^{-\delta(2-\gamma)}E_{\gamma,1-\delta(2-\gamma)}(c(t-s)^{\gamma})\eta(s)ds\\&=&J_{t}^{\delta(2-\gamma)}\frac{d}{dt}I.
\end{eqnarray*}
Using Theorem $5.1$ of \cite{haubold2011mittag}, we have
\begin{equation*}
I=c\int_{0}^{t}(t-s)^{\gamma-\delta(2-\gamma)}E_{\gamma,\gamma-\delta(2-\gamma)+1}(c(t-s)^{\gamma})\eta(s)ds+J_{t}^{1-\delta(2-\gamma)}\eta(t).
\end{equation*}
Since $\eta \in J^{\delta(2-\gamma)}(L^{1})$, there exist a function $\phi \in L^{1}(0,T)$ such that $\eta(t)=J^{\delta(2-\gamma)}\phi(t)$. Now
\begin{eqnarray}\label{eq3.4}
&&\frac{dI}{dt}=c\int_{0}^{t}(t-s)^{\gamma-\delta(2-\gamma)-1}E_{\gamma,\gamma-\delta(2-\gamma)}(c(t-s)^{\gamma})\eta(s)ds+\frac{d}{dt}J_{t}^{1-\delta(2-\gamma)}\eta(t)\nonumber\\
&&=c\int_{0}^{t}(t-s)^{\gamma-\delta(2-\gamma)-1}E_{\gamma,\gamma-\delta(2-\gamma)}(c(t-s)^{\gamma})\eta(s)ds+\frac{d}{dt}J_{t}^{1-\delta(2-\gamma)}J_{t}^{\delta(2-\gamma)}\phi(t)\nonumber\\
&&=c\int_{0}^{t}(t-s)^{\gamma-\delta(2-\gamma)-1}E_{\gamma,\gamma-\delta(2-\gamma)}(c(t-s)^{\gamma})\eta(s)ds+\phi(t).
\end{eqnarray}
Using \eqref{eq3.4}, we have
\begin{eqnarray*}
D_{0}^{\gamma,\delta}\omega_{2}(t)&=&c\int_{0}^{t}(t-s)^{\gamma-1}E_{\gamma,\gamma}(c(t-s)^{\gamma})\eta(s)ds+J_{t}^{\delta(2-\gamma)}\phi(t)\\
&=& c\int_{0}^{t}(t-s)^{\gamma-1}E_{\gamma,\gamma}(c(t-s)^{\gamma})\eta(s)ds+\eta(t).
\end{eqnarray*}
Hence
\begin{equation*}
D_{0}^{\gamma,\delta}\omega(t)=\eta(t)+c \omega(t).
\end{equation*}
\end{proof}
\section{Linear Abstract Hilfer fractional Cauchy problem}
Now, in this section we discuss the following type of Cauchy problem
\begin{equation}\label{eq4.1}
D_{0}^{\gamma,\delta}\omega(t)=\mathcal{A}\omega(t)
\end{equation}
\begin{equation}\label{eq4.2}
(g_{(1-\delta)(2-\gamma)}*\omega)(0)=x, (g_{(1-\delta)(2-\gamma)}*\omega)^{'}(0)=0
\end{equation}
in a Banach space $X$, where $\mathcal{A}:D(\mathcal{A}) \subset X \rightarrow X$ is a linear densely defined operator on $X$.\\

Next, we generalize the properties of  $ \{t^{\gamma+\delta(2-\gamma)-2}E_{\gamma,\gamma+\delta(2-\gamma)-1}(ct^{\gamma})\}_{t>0} $ to introduce a family of bounded linear operators.
\begin{definition} We define general fractional cosine operator functions $\{C_{\gamma,\delta}(t)\}_{t>0}$ of order $\gamma \in (1,2)$ and type $0 \leq \delta\leq 1\; $ as a family of bounded linear operators such that
\begin{enumerate}[label=(\roman*)]
\item $C_{\gamma,\delta}(t)$ is strongly continuous;
\item $\lim_{t \rightarrow 0+} \frac{C_{\gamma,\delta}(t)x}{t^{\gamma+\delta(2-\gamma)-2}}=\frac{x}{\Gamma(\gamma+\delta(2-\gamma)-1)}$;
\item $C_{\gamma,\delta}(t)C_{\gamma,\delta}(s)=C_{\gamma,\delta}(s)C_{\gamma,\delta}(t)$ for all $t,s >0$.
\item
 \begin{eqnarray}\label{eq4.5}
&& C_{\gamma,\delta}(s)J_{t}^{\gamma}C_{\gamma,\delta}(t)-J_{s}^{\gamma}C_{\gamma,\delta}(s)C_{\gamma,\delta}(t)=\frac{s^{\gamma+\delta(2-\gamma)-2}}{\Gamma(\gamma+\delta(2-\gamma)-1)}J_{t}^{\gamma}C_{\gamma,\delta}(t)- \nonumber \\ && \frac{t^{\gamma+\delta(2-\gamma)-2}}{\Gamma(\gamma+\delta(2-\gamma)-1)}J_{s}^{\gamma}C_{\gamma,\delta}(s), \; \; t,s >0.
\end{eqnarray}
\end{enumerate}
\end{definition}


\begin{definition}
The generator of a general fractional cosine operator function of order $1<\gamma<2$ and type $\delta \in [0,1]$ on Banach space $X$ as a linear operator with domain 
$D(\mathcal{A})=\bigg\{x\in X:\lim_{t\rightarrow 0+}\bigg(\frac{C_{\gamma,\delta}(t)x-\frac{t^{\gamma+\delta(2-\gamma)-2}}{\Gamma(\gamma+\delta(2-\gamma)-1)}x}{t^{2\gamma+\delta(2-\gamma)-2}}\bigg) \;\mbox{exists}\bigg\}$\
and defined as
$\mathcal{A}:D(\mathcal{A}) \subset X \rightarrow X$ with
$$\mathcal{A}x=\Gamma(2\gamma+\delta(2-\gamma)-1)\lim_{t\rightarrow 0+}\bigg(\frac{C_{\gamma,\delta}(t)x-\frac{t^{\gamma+\delta(2-\gamma)-2}}{\Gamma(\gamma+\delta(2-\gamma)-1)}x}{t^{2\gamma+\delta(2-\gamma)-2}}\bigg).$$

\end{definition}

\begin{prop}\label{prop1}
Let $\{C_{\gamma,\delta}(t)\}_{t>0}$ be a general fractional cosine operator function of order $1<\gamma<2$ and type $\delta \in [0,1]$ and  $\mathcal{A}$ being its generator, then
\begin{enumerate}[label=\alph*)]

\item $C_{\gamma,\delta}(t)$ maps $D(\mathcal{A})$ to  $D(\mathcal{A})$  and $C_{\gamma,\delta}(t)\mathcal{A}x=\mathcal{A} C_{\gamma,\delta}(t)x, \; \; x \in D(\mathcal{A});$
\item $\forall x \in X, t>0 \; J_{t}^{\gamma}C_{\gamma,\delta}(t)x \in D(\mathcal{A}) $ and
$$ C_{\gamma,\delta}(t)x=\frac{t^{\gamma+\delta(2-\gamma)-2}}{\Gamma(\gamma+\delta(2-\gamma)-1)}x+\mathcal{A}J_{t}^{\gamma}C_{\gamma,\delta}(t)x;$$
\item For $ x \in D(\mathcal{A})$
$$ C_{\gamma,\delta}(t)x=\frac{t^{\gamma+\delta(2-\gamma)-2}}{\Gamma(\gamma+\delta(2-\gamma)-1)}x+J_{t}^{\gamma}C_{\gamma,\delta}(t)\mathcal{A}x;$$

\item $\mathcal{A}$ is a closed and densely defined operator.
\item $\mathcal{A}$ corresponds to at most one general fractional cosine operator function of order $1<\gamma<2$ and type $\delta \in [0,1]$.
\item $C_{\gamma,\delta}(\cdot)x\in C^{1}((0,\infty);X)$ if $x\in D(\mathcal{A})$.
\end{enumerate}
\end{prop}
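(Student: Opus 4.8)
The plan is to derive two preliminary facts, deduce the structural identities (b), then (a) and (c) from them and the axioms, obtain (d) and (f) as short consequences, and treat (e) — the only substantial step — last. Throughout write $\mu:=\gamma+\delta(2-\gamma)-1$, so $\mu\in(0,1]$, the exponent in axiom~(ii) is $\mu-1\in(-1,0]$, and the exponents in the definition of $\mathcal{A}$ are $2\gamma+\delta(2-\gamma)-2=\gamma+\mu-1>0$ and $2\gamma+\delta(2-\gamma)-1=\gamma+\mu$; put $g_\mu(t):=t^{\mu-1}/\Gamma(\mu)$, so $J_t^\gamma f=g_\gamma*f$ and $J_t^\gamma g_\mu=g_{\gamma+\mu}$. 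First, by axioms~(i)--(ii) the family $\{r^{1-\mu}C_{\gamma,\delta}(r):0<r\le T\}$ is pointwise bounded for every $T$, so the uniform boundedness principle gives $\|C_{\gamma,\delta}(r)\|\le M_T r^{\mu-1}$ on $(0,T]$; in particular each $J_t^\gamma C_{\gamma,\delta}(t)$ is a bounded operator. Secondly, writing $C_{\gamma,\delta}(r)z=r^{\mu-1}\bigl(z/\Gamma(\mu)+\varepsilon(r)\bigr)$ with $\varepsilon(r)\to0$ (axiom~(ii)) and using $\int_0^t(t-r)^{\gamma-1}r^{\mu-1}\,dr=\frac{\Gamma(\gamma)\Gamma(\mu)}{\Gamma(\gamma+\mu)}t^{\gamma+\mu-1}$ one gets
\begin{equation}\label{eqPprelim}
t^{-(\gamma+\mu-1)}J_t^\gamma C_{\gamma,\delta}(t)z\ \longrightarrow\ \frac{z}{\Gamma(\gamma+\mu)}\quad(t\to0+),\qquad z\in X.
\end{equation}

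For (b) I would fix $x\in X$, $t>0$, set $y:=J_t^\gamma C_{\gamma,\delta}(t)x$, apply \eqref{eq4.5} to $x$ and rearrange (only linearity is needed) to $C_{\gamma,\delta}(s)y-g_\mu(s)y=J_s^\gamma C_{\gamma,\delta}(s)\bigl(C_{\gamma,\delta}(t)x-g_\mu(t)x\bigr)$; dividing by $s^{\gamma+\mu-1}$ and letting $s\to0+$, the right side tends by \eqref{eqPprelim} to $\bigl(C_{\gamma,\delta}(t)x-g_\mu(t)x\bigr)/\Gamma(\gamma+\mu)$, so the left-hand limit exists, i.e.\ $y\in D(\mathcal{A})$ and $\mathcal{A}y=C_{\gamma,\delta}(t)x-g_\mu(t)x$, which is (b). For (a) I would, for $x\in D(\mathcal{A})$, apply the bounded operator $C_{\gamma,\delta}(t)$ to $\bigl(C_{\gamma,\delta}(s)x-g_\mu(s)x\bigr)/s^{\gamma+\mu-1}\to\mathcal{A}x/\Gamma(\gamma+\mu)$ and commute $C_{\gamma,\delta}(t)$ past $C_{\gamma,\delta}(s)$ via axiom~(iii); the limit of $\bigl(C_{\gamma,\delta}(s)(C_{\gamma,\delta}(t)x)-g_\mu(s)(C_{\gamma,\delta}(t)x)\bigr)/s^{\gamma+\mu-1}$ then exists and equals $C_{\gamma,\delta}(t)\mathcal{A}x/\Gamma(\gamma+\mu)$, giving $C_{\gamma,\delta}(t)x\in D(\mathcal{A})$ and $\mathcal{A}C_{\gamma,\delta}(t)x=C_{\gamma,\delta}(t)\mathcal{A}x$. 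For (c) I would apply instead the bounded operator $J_t^\gamma C_{\gamma,\delta}(t)$ to the same limit and push $C_{\gamma,\delta}(s)$ inside the integral defining $J_t^\gamma C_{\gamma,\delta}(t)$ (again axiom~(iii)); this yields $\bigl(C_{\gamma,\delta}(s)y-g_\mu(s)y\bigr)/s^{\gamma+\mu-1}\to J_t^\gamma C_{\gamma,\delta}(t)\mathcal{A}x/\Gamma(\gamma+\mu)$ with $y=J_t^\gamma C_{\gamma,\delta}(t)x$, so $\mathcal{A}y=J_t^\gamma C_{\gamma,\delta}(t)\mathcal{A}x$, and comparing with $\mathcal{A}y=C_{\gamma,\delta}(t)x-g_\mu(t)x$ from (b) gives $C_{\gamma,\delta}(t)x=g_\mu(t)x+J_t^\gamma C_{\gamma,\delta}(t)\mathcal{A}x$.

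Granting these, (d) and (f) are routine. Density: $\Gamma(\gamma+\mu)t^{-(\gamma+\mu-1)}J_t^\gamma C_{\gamma,\delta}(t)x\in D(\mathcal{A})$ by (b) and tends to $x$ by \eqref{eqPprelim}. Closedness: if $x_n\to x$ and $\mathcal{A}x_n\to z$ with $x_n\in D(\mathcal{A})$, insert $x_n$ into (c) and let $n\to\infty$ (both operators being bounded) to get $C_{\gamma,\delta}(t)x=g_\mu(t)x+J_t^\gamma C_{\gamma,\delta}(t)z$, then divide by $t^{\gamma+\mu-1}$, let $t\to0+$ and use \eqref{eqPprelim} to conclude $x\in D(\mathcal{A})$, $\mathcal{A}x=z$; uniqueness of the generator is the content of (e) below. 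For (f): in (c) the term $g_\mu(t)x$ is smooth on $(0,\infty)$, while $s\mapsto C_{\gamma,\delta}(s)\mathcal{A}x$ is continuous on $(0,\infty)$ and (by $\|C_{\gamma,\delta}(s)\|\le M_T s^{\mu-1}$, $\mu-1>-1$) locally integrable at $0$, so writing $J_t^\gamma=J_t^1J_t^{\gamma-1}$ and differentiating the outer integral shows $t\mapsto J_t^\gamma C_{\gamma,\delta}(t)\mathcal{A}x$ is $C^1$ on $(0,\infty)$ with derivative $J_t^{\gamma-1}C_{\gamma,\delta}(t)\mathcal{A}x$, continuous since $\gamma-2>-1$ and $\gamma+\mu-2>-1$; hence $C_{\gamma,\delta}(\cdot)x\in C^1((0,\infty);X)$.

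The hard part is (e), and the plan there is a Laplace-transform argument. Once $\{C_{\gamma,\delta}(t)\}_{t>0}$ is known to be exponentially bounded — so that $\widehat{C_{\gamma,\delta}}(\lambda)=\int_0^\infty e^{-\lambda t}C_{\gamma,\delta}(t)\,dt$ exists for $\operatorname{Re}\lambda$ large — transforming (b) and using the closedness of $\mathcal{A}$ (from (d)) to pull it out of the integral gives $(\lambda^\gamma I-\mathcal{A})\widehat{C_{\gamma,\delta}}(\lambda)x=\lambda^{1-\delta(2-\gamma)}x$ for all $x\in X$; this relation also forces $\lambda^\gamma\in\rho(\mathcal{A})$ for $\lambda$ in a suitable right half-plane, whence $\widehat{C_{\gamma,\delta}}(\lambda)=\lambda^{1-\delta(2-\gamma)}(\lambda^\gamma I-\mathcal{A})^{-1}$ is determined by $\mathcal{A}$ alone, and injectivity of the Laplace transform makes two general fractional cosine operator functions with the same generator coincide. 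The genuine obstacle is to establish the exponential boundedness (and the resolvent identity) from the four defining axioms, which I would do in the spirit of the classical result for cosine operator functions and of the $\gamma$-order families of \cite{MPJ} and \cite{ZDM}, exploiting strong continuity together with \eqref{eq4.5}. (Alternatively, uniqueness can be read off from (c) by iteration: for $x\in D(\mathcal{A}^n)$, $(C_1-C_2)(t)x=J_t^{n\gamma}\bigl((C_1-C_2)(\cdot)\mathcal{A}^nx\bigr)(t)$ has norm $O\bigl(\|\mathcal{A}^nx\|\,t^{n\gamma+\mu-1}/\Gamma(n\gamma+\mu)\bigr)$, which $\to0$ as $n\to\infty$ on a dense set of sufficiently regular vectors.)
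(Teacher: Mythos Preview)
Your treatment of parts (a)--(d) and (f) is essentially the same as the paper's: the paper also derives the key limit \eqref{eqPprelim} (there labelled \eqref{eqa}), uses the functional equation \eqref{eq4.5} to prove (b), uses commutativity and boundedness for (a) and (c), and obtains (d) and (f) exactly as you indicate. Your preliminary local bound $\|C_{\gamma,\delta}(r)\|\le M_T r^{\mu-1}$ via the uniform boundedness principle is a welcome point of rigour that the paper leaves implicit.

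For (e), however, the paper takes a different and more economical route than your Laplace-transform plan: it appeals directly to Titchmarsh's convolution theorem, using only (c), (d) and commutativity of convolution. Concretely, if $C_1,C_2$ both satisfy (c) with the same $\mathcal{A}$, then for $z\in D(\mathcal{A})$ one computes $(C_1*C_2)(t)z$ in two ways (expanding $C_2$ via (c), then $C_1$ via (c)) and, after using commutativity of convolution, obtains $\bigl((C_1-C_2)*g_\mu\bigr)(t)z=0$ for all $t>0$; since $g_\mu$ does not vanish on any interval $[0,\varepsilon]$, Titchmarsh forces $(C_1-C_2)(\cdot)z\equiv0$, and density from (d) finishes. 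This bypasses entirely the exponential-boundedness obstacle you flagged --- which is real: the paper itself only relates $\widehat{C_{\gamma,\delta}}$ to the resolvent of $\mathcal{A}$ under the \emph{additional} hypothesis that the Laplace transform exists (see the theorem following Proposition~\ref{prop1}), so exponential growth is \emph{not} derivable from the four axioms alone and your primary plan for (e) would not close without that extra assumption. Your iteration alternative also has the gap you note (density of $\bigcap_n D(\mathcal{A}^n)$ is not available). The Titchmarsh argument is the cleanest fix.
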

\begin{proof}
\begin{enumerate}[label=\alph*)]
\item

For $x \in D(\mathcal{A})$ and all $t,s >0$, we have
$$\frac{C_{\gamma,\delta}(s)C_{\gamma,\delta}(t)x-\frac{s^{\gamma+\delta(2-\gamma)-2}}{\Gamma(\gamma+\delta(2-\gamma)-1)}C_{\gamma,\delta}(t)x}{t^{2\gamma+\delta(2-\gamma)-2}}=\frac{C_{\gamma,\delta}(t)\bigg(C_{\gamma,\delta}(s)x-\frac{s^{\gamma+\delta(2-\gamma)-2}}{\Gamma(\gamma+\delta(2-\gamma)-1)}x\bigg)}{t^{2\gamma+\delta(2-\gamma)-2}}.$$
Since $C_{\gamma,\delta}(t)$ is bounded linear operator, we have
$$\Gamma(2\gamma+\delta(2-\gamma)-1)\lim_{s\rightarrow 0+}\frac{C_{\gamma,\delta}(s)C_{\gamma,\delta}(t)x-\frac{s^{\gamma+\delta(2-\gamma)-2}}{\Gamma(\gamma+\delta(2-\gamma)-1)}C_{\gamma,\delta}(t)x}{t^{2\gamma+\delta(2-\gamma)-2}}=C_{\gamma,\delta}(t)\mathcal{A}x.$$
Hence $C_{\gamma,\delta}(t)x \in D(\mathcal{A})$ and $\mathcal{A}C_{\gamma,\delta}(t)x=C_{\gamma,\delta}(t)\mathcal{A}x.$
\item
Let $x\in X$,
\begin{eqnarray}
&&\Gamma(2\gamma+\delta(2-\gamma)-1)\lim_{s\rightarrow 0+}\frac{\bigg(C_{\gamma,\delta}(s)-\frac{s^{\gamma+\delta(2-\gamma)-2}}{\Gamma(\gamma+\delta(2-\gamma)-1)}\bigg)J_{t}^{\gamma}C_{\gamma,\delta}(t)x}{s^{2\gamma+\delta(2-\gamma)-2}}\nonumber\\&&=\Gamma(2\gamma+\delta(2-\gamma)-1)\lim_{s\rightarrow 0+}\frac{J_{s}^{\gamma}C_{\gamma,\delta}(s)\bigg(C_{\gamma,\delta}(t)x-\frac{t^{\gamma+\delta(2-\gamma)-2}x}{\Gamma(\gamma+\delta(2-\gamma)-1)}\bigg)}{s^{2\gamma+\delta(2-\gamma)-2}}
\end{eqnarray}
Now we claim that $\lim_{s \rightarrow 0+}\Gamma(2\gamma+\delta(2-\gamma)-1)\lim_{s\rightarrow 0+}\frac{J_{s}^{\gamma}C_{\gamma,\delta}(s)x}{s^{2\gamma+\delta(2-\gamma)-2}}=x.$
\begin{align*}
&\bigg\lVert\Gamma(2\gamma+\delta(2-\gamma)-1)\frac{J_{s}^{\gamma}C_{\gamma,\delta}(s)x}{s^{2\gamma+\delta(2-\gamma)-2}}- x \bigg\rVert\\
&=\bigg\lVert\frac{\Gamma(2\gamma+\delta(2-\gamma)-1)}{\Gamma(\gamma)}\int_{0}^{s}s^{2-2\gamma-\delta(2-\gamma)}(s-\tau)^{\gamma-1}C_{\gamma,\delta}(\tau)x d\tau - x \bigg\rVert\\
&=\bigg\lVert\frac{\Gamma(2\gamma+\delta(2-\gamma)-1)}{\Gamma(\gamma)}\int_{0}^{1}s^{2-\gamma-\delta(2-\gamma)}(1-\tau)^{\gamma-1}C_{\gamma,\delta}(s\tau)x d\tau - x \bigg\rVert\\
&=\bigg\lVert\frac{\Gamma(2\gamma+\delta(2-\gamma)-1)}{\Gamma(\gamma)\Gamma(\gamma+\delta(2-\gamma)-1)}\int_{0}^{1}(1-\tau)^{\gamma-1}\tau^{\gamma+\delta(2-\gamma)-2}\Gamma(\gamma+\delta(2-\gamma)-1)\frac{C_{\gamma,\delta}(s\tau)}{(s\tau)^{\gamma+\delta(2-\gamma)-2}}x d\tau - x \bigg\rVert\\
&=\bigg\lVert\frac{\Gamma(2\gamma+\delta(2-\gamma)-1)}{\Gamma(\gamma)\Gamma(\gamma+\delta(2-\gamma)-1)}\int_{0}^{1}(1-\tau)^{\gamma-1}\tau^{\gamma+\delta(2-\gamma)-2}\Gamma(\gamma+\delta(2-\gamma)-1)\frac{C_{\gamma,\delta}(s\tau)}{(s\tau)^{\gamma+\delta(2-\gamma)-2}}x d\tau\\ & - \frac{\Gamma(2\gamma+\delta(2-\gamma)-1)}{\Gamma(\gamma)\Gamma(\gamma+\delta(2-\gamma)-1)}\int_{0}^{1}(1-\tau)^{\gamma-1}\tau^{\gamma+\delta(2-\gamma)-2}d\tau\bigg\rVert\\
& \leq \frac{\Gamma(2\gamma+\delta(2-\gamma)-1)}{\Gamma(\gamma)\Gamma(\gamma+\delta(2-\gamma)-1)}\int_{0}^{1}(1-\tau)^{\gamma-1}\tau^{\gamma+\delta(2-\gamma)-2}d\tau \sup_{\tau \in [0,1]}\bigg\lVert \Gamma(\gamma+\delta(2-\gamma)-1)\frac{C_{\gamma,\delta}(s\tau)}{(s\tau)^{\gamma+\delta(2-\gamma)-2}}x-x \bigg\rVert.
\end{align*}
This implies
\begin{equation}\label{eqa}
\lim_{s \rightarrow 0+}\Gamma(2\gamma+\delta(2-\gamma)-1)\lim_{s\rightarrow 0+}\frac{J_{s}^{\gamma}C_{\gamma,\delta}(s)x}{s^{2\gamma+\delta(2-\gamma)-2}}=x.
\end{equation}
\item
Let $x\in D(\mathcal{A})$
\begin{align*}
&C_{\gamma,\delta}(t)x-\frac{t^{\gamma+\delta(2-\gamma)-2}}{\Gamma(\gamma+\delta(2-\gamma)-1)}x=\mathcal{A}J_{t}^{\gamma}C_{\gamma,\delta}(t)x\\
&=\frac{\Gamma(2\gamma+\delta(2-\gamma)-1)}{\Gamma(\gamma)}\lim_{s \rightarrow 0+}\bigg(\frac{C_{\gamma,\delta}(s)-\frac{s^{\gamma+\delta(2-\gamma)-2}}{\Gamma(\gamma+\delta(2-\gamma)-1)}}{s^{2\gamma+\delta(2-\gamma)-2}}\bigg)\int_{0}^{t}(t-\tau)^{\gamma-1}C_{\gamma,\delta}(\tau)x d\tau\\
&= \frac{\Gamma(2\gamma+\delta(2-\gamma)-1)}{\Gamma(\gamma)}\lim_{s \rightarrow 0+}\int_{0}^{t}(t-\tau)^{\gamma-1}C_{\gamma,\delta}(\tau)\bigg(\frac{C_{\gamma,\delta}(s)-\frac{s^{\gamma+\delta(2-\gamma)-2}}{\Gamma(\gamma+\delta(2-\gamma)-1)}}{s^{2\gamma+\delta(2-\gamma)-2}}\bigg)x d\tau\\
&= \frac{\Gamma(2\gamma+\delta(2-\gamma)-1)}{\Gamma(\gamma)}\int_{0}^{t}(t-\tau)^{\gamma-1}C_{\gamma,\delta}(\tau)\lim_{s \rightarrow 0+}\bigg(\frac{C_{\gamma,\delta}(s)-\frac{s^{\gamma+\delta(2-\gamma)-2}}{\Gamma(\gamma+\delta(2-\gamma)-1)}}{s^{2\gamma+\delta(2-\gamma)-2}}\bigg)x d\tau\\
&= J_{t}^{\gamma}C_{\gamma,\delta}(t)\mathcal{A}x.
\end{align*}
\item Let $x_{n}\in D(\mathcal{A}), x_{n}\rightarrow x$ and $\mathcal{A}x_{n}\rightarrow y$ as $n\rightarrow \infty$.
\begin{eqnarray}
C_{\gamma,\delta}(t)x-\frac{t^{\gamma+\delta(2-\gamma)-2}}{\Gamma(\gamma+\delta(2-\gamma)-1)}x&=&\lim_{n\rightarrow \infty}\bigg(C_{\gamma,\delta}(t)x_{n}-\frac{t^{\gamma+\delta(2-\gamma)-2}}{\Gamma(\gamma+\delta(2-\gamma)-1)}x_{n}\bigg)\nonumber\\
&=&\lim_{n\rightarrow \infty}J_{t}^{\gamma}C_{\gamma,\delta}(t)\mathcal{A}x_{n}\nonumber\\
&=&\lim_{n\rightarrow \infty}\frac{1}{\Gamma(\gamma)}\int_{0}^{t}(t-s)^{\gamma-1}C_{\gamma,\delta}(s)\mathcal{A}x_{n}ds \nonumber\\
&=&\frac{1}{\Gamma(\gamma)}\int_{0}^{t}(t-s)^{\gamma-1}C_{\gamma,\delta}(s)yds \nonumber\\
&=&J_{t}^{\gamma}C_{\gamma,\delta}(t)y.
\end{eqnarray}
Using definition of $D(\mathcal{A})$and \eqref{eqa}, we have
\begin{eqnarray}
\mathcal{A}x&=&\Gamma(2\gamma+\delta(2-\gamma)-1)\lim_{t\rightarrow 0+}\bigg(\frac{C_{\gamma,\delta}(t)x-\frac{t^{\gamma+\delta(2-\gamma)-2}}{\Gamma(\gamma+\delta(2-\gamma)-1)}x}{t^{2\gamma+\delta(2-\gamma)-2}}\bigg)\nonumber\\
&=&\Gamma(2\gamma+\delta(2-\gamma)-1)\lim_{t\rightarrow 0+}\frac{J_{t}^{\gamma}C_{\gamma,\delta}(t)y}{t^{2\gamma+\delta(2-\gamma)-2}}=y.
\end{eqnarray}
Hence $\mathcal{A}$ is closed operator.\\
For $x\in X$, set $x_{t}=J_{t}^{\gamma}C_{\gamma,\delta}(t)x$. Then $x_{t}\in D(\mathcal{A})$ from $(b)$ and by \eqref{eqa} we have $\Gamma(2\gamma+\delta(2-\gamma)-1)\lim_{t\rightarrow 0+}\frac{J_{t}^{\gamma}C_{\gamma,\delta}(t)x}{t^{2\gamma+\delta(2-\gamma)-2}}=x$. Thus $\overline{D(\mathcal{A})}=X$.
\item Follows from $(c),(d)$ and Titchmarsh's Theorem.
\item
Next claim $C_{\gamma,\delta}(t)x \in C^{1}((0,\infty);X)$ for any $x\in D(\mathcal{A})$.
\begin{eqnarray*}
	&&\frac{d}{dt}C_{\gamma,\delta}(t)x=\frac{d}{dt}\bigg(\frac{t^{\gamma+\delta(2-\gamma)-2}}{\Gamma(\gamma+\delta(2-\gamma)-1)}x+J_{t}^{\gamma}C_{\gamma,\delta}(t)\mathcal{A}x\bigg)\\
	&&=\frac{(\gamma+\delta(2-\gamma)-2)}{\Gamma(\gamma+\delta(2-\gamma)-1)}t^{\gamma+\delta(2-\gamma)-3}x+J_{t}^{\gamma-1}C_{\gamma,\delta}(t)\mathcal{A}x\\
	&&=\frac{(\gamma+\delta(2-\gamma)-2)}{\Gamma(\gamma+\delta(2-\gamma)-1)}t^{\gamma+\delta(2-\gamma)-3}x+\frac{1}{\Gamma(\gamma-1)}\int_{0}^{t}(t-\tau)^{\gamma-2}C_{\gamma,\delta}(\tau)\mathcal{A}x d\tau\\
	&&=\frac{(\gamma+\delta(2-\gamma)-2)}{\Gamma(\gamma+\delta(2-\gamma)-1)}t^{\gamma+\delta(2-\gamma)-3}x+\frac{t^{\gamma-1}}{\Gamma(\gamma-1)}\int_{0}^{1}(1-\tau)^{\gamma-2}C_{\gamma,\delta}(t\tau)\mathcal{A}x d\tau.
\end{eqnarray*}
The dominated convergence theorem gives $C_{\gamma,\delta}(.)x \in C^{1}((0,\infty);X)$.
\end{enumerate}
\end{proof}
\begin{definition}
We define a family of bounded linear operators $\{\mathfrak{T}(t)\}_{t>0}$ as a solution operator  of  \ref{eq4.1}-\ref{eq4.2} if
\begin{enumerate}
\item  $\mathfrak{T}(\cdot)x \in C((0,\infty),X)$ and
$$\lim_{t \rightarrow 0+} \Gamma(\gamma+\delta(2-\gamma)-1)\frac{\mathfrak{T}(t)}{t^{\gamma+\delta(2-\gamma)-2}}x=x \; \mbox{for} \; x\in X;$$
\item $\mathfrak{T}(t)D(\mathcal{A}) \subset D(\mathcal{A}),\; \mathcal{A}\mathfrak{T}(t)x=\mathfrak{T}(t)\mathcal{A}x \; \mbox{for}\; x \in D(\mathcal{A});$
\item For all $ x \in D(\mathcal{A})$
$$ \mathfrak{T}(t)x=\frac{t^{\gamma+\delta(2-\gamma)-2}}{\Gamma(\gamma+\delta(2-\gamma)-1)}x+J_{t}^{\gamma}\mathfrak{T}(t)\mathcal{A}x.$$
\end{enumerate}

\end{definition}
\begin{definition}
$\omega \in C((0,\infty),X)$ is a mild solution if $J_{t}^{\gamma}\omega(t) \in D(\mathcal{A})$ and $ \omega(t)=\frac{t^{\gamma+\delta(2-\gamma)-2}}{\Gamma(\gamma+\delta(2-\gamma)-1)}+\mathcal{A}J_{t}^{\gamma}\omega(t),\; t\in (0,\infty).$
\end{definition}
\begin{definition}
$\omega \in C((0,\infty),X)$ is  a strong solution if $\omega(t) \in D(\mathcal{A})$ for  $t>0$ and $ D_{0}^{\gamma,\delta}\omega(t)$ is continuous on $(0,\infty)$ and \ref{eq4.1}-\ref{eq4.2} holds.
\end{definition}
\begin{thm} 
Let $\mathcal{A}$ be the generator of a general fractional cosine operator function of order $1<\gamma<2$ and type $\delta \in [0,1$ i.e. $C_{\gamma,\delta}(t)$, then $C_{\gamma,\delta}(t)x$ is strong solution for $x\in D(\mathcal{A})$.
\end{thm}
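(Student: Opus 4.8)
The plan is to put $\omega(t):=C_{\gamma,\delta}(t)x$ for $x\in D(\mathcal{A})$ and to verify, one by one, the three conditions in the definition of a strong solution. The one ingredient that does all the work is the representation from Proposition~\ref{prop1}(c),
\[
C_{\gamma,\delta}(t)x=\frac{t^{\gamma+\delta(2-\gamma)-2}}{\Gamma(\gamma+\delta(2-\gamma)-1)}\,x+J_{t}^{\gamma}C_{\gamma,\delta}(t)\mathcal{A}x,\qquad t>0.
\]
The regularity part is then immediate: by Proposition~\ref{prop1}(a), $C_{\gamma,\delta}(t)$ leaves $D(\mathcal{A})$ invariant, so $\omega(t)\in D(\mathcal{A})$ for every $t>0$, and $\mathcal{A}\omega(t)=\mathcal{A}C_{\gamma,\delta}(t)x=C_{\gamma,\delta}(t)\mathcal{A}x$, which is strongly continuous on $(0,\infty)$.

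Next I would compute $D_{0}^{\gamma,\delta}\omega=J_{t}^{\delta(2-\gamma)}\frac{d^{2}}{dt^{2}}J_{t}^{(1-\delta)(2-\gamma)}\omega$ from the representation. Applying $J_{t}^{(1-\delta)(2-\gamma)}$ and using the semigroup law for Riemann--Liouville integrals, the orders telescope: the first term becomes the constant $x$ (its exponent collapses to $t^{0}/\Gamma(1)$) and the second becomes $J_{t}^{2-\delta(2-\gamma)}C_{\gamma,\delta}(t)\mathcal{A}x$. Differentiating twice kills the constant and leaves $D_{0}^{\gamma,\delta}\omega(t)=J_{t}^{\delta(2-\gamma)}\frac{d}{dt}J_{t}^{1-\delta(2-\gamma)}C_{\gamma,\delta}(t)\mathcal{A}x$. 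With $\mu:=\delta(2-\gamma)\in[0,1)$, the composition $J_{t}^{\mu}\frac{d}{dt}J_{t}^{1-\mu}$ recovers the identity up to the boundary term $\tfrac{t^{\mu-1}}{\Gamma(\mu)}\lim_{t\to0^{+}}\big(J_{t}^{1-\mu}C_{\gamma,\delta}(\cdot)\mathcal{A}x\big)(t)$ (absent when $\delta=0$); by property (ii) of the definition of $C_{\gamma,\delta}$ this limit equals $\lim_{t\to0^{+}}\tfrac{t^{\gamma-1}}{\Gamma(\gamma)}\mathcal{A}x=0$ precisely because $\gamma>1$. Hence $D_{0}^{\gamma,\delta}\omega(t)=C_{\gamma,\delta}(t)\mathcal{A}x=\mathcal{A}\omega(t)$, which is \eqref{eq4.1}, and the right-hand side is continuous on $(0,\infty)$. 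This is the same calculation as in Lemma~2 with $c=0$, once one notes that the $t^{\gamma+\delta(2-\gamma)-2}$ behaviour at $0$ together with $\gamma>1$ puts $C_{\gamma,\delta}(\cdot)\mathcal{A}x$ in $J^{\delta(2-\gamma)}(L^{1})$.

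Finally I would check the initial conditions \eqref{eq4.2}. From the computation above, $g_{(1-\delta)(2-\gamma)}*\omega=J_{t}^{(1-\delta)(2-\gamma)}\omega=x+J_{t}^{2-\delta(2-\gamma)}C_{\gamma,\delta}(t)\mathcal{A}x$, and the last term is $O(t^{\gamma})\to0$ as $t\to0^{+}$, so $(g_{(1-\delta)(2-\gamma)}*\omega)(0)=x$. Differentiating once gives $J_{t}^{1-\delta(2-\gamma)}C_{\gamma,\delta}(t)\mathcal{A}x=O(t^{\gamma-1})\to0$, so $(g_{(1-\delta)(2-\gamma)}*\omega)'(0)=0$. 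All these $t\to0^{+}$ evaluations, as well as those in the previous step, are routine once one invokes the dominated convergence theorem and property (ii) of the definition of $C_{\gamma,\delta}$.

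The delicate point I expect is the identity $D_{0}^{\gamma,\delta}J_{t}^{\gamma}f=f$ (equivalently $J_{t}^{\mu}\frac{d}{dt}J_{t}^{1-\mu}f=f$) for the Banach-space-valued $f=C_{\gamma,\delta}(\cdot)\mathcal{A}x$ carrying an integrable singularity at the origin: one must justify moving $\frac{d}{dt}$ past the Bochner integrals defining the fractional integrals and must verify the vanishing of the boundary term, which is exactly where $1<\gamma<2$ enters. Everything else is telescoping of fractional-integration orders and elementary limits.
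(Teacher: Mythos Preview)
Your proposal is correct and follows essentially the same route as the paper: start from the representation in Proposition~\ref{prop1}(c), apply $J_t^{(1-\delta)(2-\gamma)}$ so the power term collapses to the constant $x$, differentiate, and then use $J_t^{\delta(2-\gamma)}D_0^{\delta(2-\gamma)}$ to recover $C_{\gamma,\delta}(t)\mathcal{A}x$, with the initial conditions read off from the same computation via dominated convergence. If anything, you are more explicit than the paper about the boundary term in the identity $J_t^{\mu}\tfrac{d}{dt}J_t^{1-\mu}f=f$ and about why $\gamma>1$ is what makes it vanish; the paper simply writes $J_t^{\delta(2-\gamma)}D_0^{\delta(2-\gamma)}C_{\gamma,\delta}(t)\mathcal{A}x=C_{\gamma,\delta}(t)\mathcal{A}x$ without isolating that step.
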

\begin{proof}
Since
$$ C_{\gamma,\delta}(t)x=\frac{t^{\gamma+\delta(2-\gamma)-2}}{\Gamma(\gamma+\delta(2-\gamma)-1)}x+J_{t}^{\gamma}C_{\gamma,\delta}(t)\mathcal{A}x,$$
we have
$$J_{t}^{(1-\delta)(2-\gamma)}C_{\gamma,\delta}(t)x=x+J_{t}^{2-\delta(2-\gamma)}C_{\gamma,\delta}(t)\mathcal{A}x.$$
Hence
\begin{eqnarray*}
&&\lim_{t \rightarrow 0+}J_{t}^{(1-\delta)(2-\gamma)}C_{\gamma,\delta}(t)x=x+\lim_{t \rightarrow 0+}J_{t}^{2-\delta(2-\gamma)}C_{\gamma,\delta}(t)\mathcal{A}x\\
&&=x+\lim_{t \rightarrow 0+}\frac{1}{\Gamma(2-\delta(2-\gamma))}\int_{0}^{t}(t-s)^{1-\delta(2-\gamma)}C_{\gamma,\delta}(s)\mathcal{A}x ds\\
&&=x+\lim_{t \rightarrow 0+}\frac{1}{\Gamma(2-\delta(2-\gamma))}\int_{0}^{1}t^{2-\delta(2-\gamma)}(1-u)^{1-\delta(2-\gamma)}C_{\gamma,\delta}(tu)\mathcal{A}x du\\
&&=x+\lim_{t \rightarrow 0+}\frac{1}{\Gamma(2-\delta(2-\gamma))}t^{\gamma}\int_{0}^{1}u^{\gamma+\delta(2-\gamma)-2}(1-u)^{1-\delta(2-\gamma)}(tu)^{2-\gamma-\delta(2-\gamma)}C_{\gamma,\delta}(tu)\mathcal{A}x du.\\
&& =x,
\end{eqnarray*}
by  Lebesgue's dominated convergence theorem. Similarly
\begin{eqnarray*}
&&\frac{d}{dt}J_{t}^{(1-\delta)(2-\gamma)}C_{\gamma,\delta}(t)x=J_{t}^{1-\delta(2-\gamma)}C_{\gamma,\delta}(t)\mathcal{A}x\\
&&=\frac{1}{\Gamma(1-\delta(2-\gamma))}\int_{0}^{t}(t-s)^{-\delta(2-\gamma)}C_{\gamma,\delta}(s)\mathcal{A}x ds\\
&&=\frac{1}{\Gamma(1-\delta(2-\gamma))}\int_{0}^{1}t^{1-\delta(2-\gamma)}(1-u)^{-\delta(2-\gamma)}C_{\gamma,\delta}(tu)\mathcal{A}x du\\
&&=\frac{1}{\Gamma(1-\delta(2-\gamma))}t^{\gamma-1}\int_{0}^{1}u^{\gamma+\delta(2-\gamma)-2}(1-u)^{-\delta(2-\gamma)}(tu)^{2-\gamma-\delta(2-\gamma)}C_{\gamma,\delta}(tu)\mathcal{A}x du\\
&& \rightarrow 0 \;\mbox{as}\; t \rightarrow 0+.
\end{eqnarray*}
Then
$$\frac{d^{2}}{dt^{2}}J_{t}^{(1-\delta)(2-\gamma)}C_{\gamma,\delta}(t)x=\frac{d}{dt}J_{t}^{1-\delta(2-\gamma)}C_{\gamma,\delta}(t)\mathcal{A}x=D_{0}^{\delta(2-\gamma)}C_{\gamma,\delta}(t)\mathcal{A}x.$$
Hence
$$D_{0}^{\gamma,\delta}C_{\gamma,\delta}(t)x=J_{t}^{\delta(2-\gamma)}D_{0}^{\delta(2-\gamma)}C_{\gamma,\delta}(t)\mathcal{A}x=C_{\gamma,\delta}(t)\mathcal{A}x=\mathcal{A}C_{\gamma,\delta}(t)x.$$

\end{proof}
\begin{lemma}
Let $\mathcal{A}$ generates a general fractional cosine operator function of order $1<\gamma<2$ and type $\delta \in [0,1]$. Then there exist a solution operator $S_{\gamma}(t)$ of the following FDE
\begin{equation}\label{eq4.8a}
D_{t}^{\gamma}[\omega(t)-x]=\mathcal{A}\omega(t), \; t>0,
\end{equation}
\begin{equation}\label{eq4.9a}
\omega(0)=x, w^{\prime}(0)=0.
\end{equation}
\end{lemma}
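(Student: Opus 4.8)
The plan is to construct $S_{\gamma}(t)$ directly from $C_{\gamma,\delta}(t)$ by applying a Riemann--Liouville integral of the right fractional order. The choice is dictated by the scalar picture: by Lemma \ref{lemma1} (with $y=0$) the resolvent of the scalar version of \eqref{eq4.8a}--\eqref{eq4.9a} is $E_{\gamma,1}(ct^{\gamma})$, and one has the identity $J_{t}^{(1-\delta)(2-\gamma)}\big[t^{\gamma+\delta(2-\gamma)-2}E_{\gamma,\gamma+\delta(2-\gamma)-1}(ct^{\gamma})\big]=E_{\gamma,1}(ct^{\gamma})$. So I would set
$$S_{\gamma}(t)x:=J_{t}^{(1-\delta)(2-\gamma)}C_{\gamma,\delta}(t)x=\frac{1}{\Gamma((1-\delta)(2-\gamma))}\int_{0}^{t}(t-s)^{(1-\delta)(2-\gamma)-1}C_{\gamma,\delta}(s)x\,ds,\qquad t>0,$$
with the convention that $J_{t}^{0}$ is the identity, so $S_{\gamma}(t)=C_{\gamma,1}(t)$ in the Caputo case $\delta=1$. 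Since $(1-\delta)(2-\gamma)\ge 0$ and $\gamma+\delta(2-\gamma)-2>-1$, the singularity of $C_{\gamma,\delta}(s)x$ at $s=0$ — of order $s^{\gamma+\delta(2-\gamma)-2}$ by property (ii) of the definition — is integrable, so the integral converges; boundedness of $S_{\gamma}(t)$ follows from the uniform boundedness principle applied to the family $\{\,r^{-(\gamma+\delta(2-\gamma)-2)}C_{\gamma,\delta}(r):r\in(0,t]\,\}$, which is pointwise bounded because $r\mapsto r^{-(\gamma+\delta(2-\gamma)-2)}C_{\gamma,\delta}(r)x$ is continuous on $(0,t]$ with a finite limit at $0$. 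Strong continuity of $S_{\gamma}(\cdot)x$ on $(0,\infty)$ is the standard continuity of convolution with an $L^{1}_{\mathrm{loc}}$ kernel.

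Next I would verify the three defining properties of a solution operator of \eqref{eq4.8a}--\eqref{eq4.9a} in turn, the organizing fact throughout being the arithmetic identity $(\gamma+\delta(2-\gamma)-1)+(1-\delta)(2-\gamma)=1$. For the initial data, substitute $s=t\sigma$ and write $C_{\gamma,\delta}(t\sigma)x=(t\sigma)^{\gamma+\delta(2-\gamma)-2}\big[(t\sigma)^{-(\gamma+\delta(2-\gamma)-2)}C_{\gamma,\delta}(t\sigma)x\big]$; the powers of $t$ then collapse to $t^{0}$, a Beta integral $B((1-\delta)(2-\gamma),\gamma+\delta(2-\gamma)-1)=\Gamma((1-\delta)(2-\gamma))\Gamma(\gamma+\delta(2-\gamma)-1)$ appears, and passing $t\to0^{+}$ under the integral (dominated convergence, using property (ii) and the bound from the previous paragraph) yields $\lim_{t\to0^{+}}S_{\gamma}(t)x=x$; in particular $S_{\gamma}(\cdot)x$ extends continuously to $[0,\infty)$ with $S_{\gamma}(0)=I$. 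For the commutation property, for $x\in D(\mathcal{A})$ Proposition \ref{prop1}(a) gives $C_{\gamma,\delta}(s)x\in D(\mathcal{A})$ and $\mathcal{A}C_{\gamma,\delta}(s)x=C_{\gamma,\delta}(s)\mathcal{A}x$, and since $\mathcal{A}$ is closed by Proposition \ref{prop1}(d) it passes through the Bochner integral, so $S_{\gamma}(t)x\in D(\mathcal{A})$ and $\mathcal{A}S_{\gamma}(t)x=S_{\gamma}(t)\mathcal{A}x$.

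Finally, for the integral equation I would apply $J_{t}^{(1-\delta)(2-\gamma)}$ to the identity of Proposition \ref{prop1}(c),
$$C_{\gamma,\delta}(t)x=\frac{t^{\gamma+\delta(2-\gamma)-2}}{\Gamma(\gamma+\delta(2-\gamma)-1)}x+J_{t}^{\gamma}C_{\gamma,\delta}(t)\mathcal{A}x,\qquad x\in D(\mathcal{A}).$$
Using $J_{t}^{(1-\delta)(2-\gamma)}t^{\gamma+\delta(2-\gamma)-2}=\Gamma(\gamma+\delta(2-\gamma)-1)\,t^{0}$ (again the identity above) the first term becomes $x$, while the semigroup law $J_{t}^{(1-\delta)(2-\gamma)}J_{t}^{\gamma}=J_{t}^{\gamma}J_{t}^{(1-\delta)(2-\gamma)}$ turns the second into $J_{t}^{\gamma}S_{\gamma}(t)\mathcal{A}x$, giving $S_{\gamma}(t)x=x+J_{t}^{\gamma}S_{\gamma}(t)\mathcal{A}x$ for all $x\in D(\mathcal{A})$; combined with the commutation property this is $S_{\gamma}(t)x=x+J_{t}^{\gamma}\mathcal{A}S_{\gamma}(t)x$, i.e. the integrated form of \eqref{eq4.8a} with $\omega(0)=x$, $\omega'(0)=0$. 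Hence $S_{\gamma}(t)$ is a solution operator. The only genuinely delicate steps are the passage of $\mathcal{A}$ through the integral (handled by closedness) and the $t\to0^{+}$ limit (handled by the integrable bound $s^{\gamma+\delta(2-\gamma)-2}$ coming from property (ii)); the rest is bookkeeping forced by the identity $(\gamma+\delta(2-\gamma)-1)+(1-\delta)(2-\gamma)=1$.
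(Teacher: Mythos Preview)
Your proposal is correct and follows essentially the same approach as the paper: define $S_{\gamma}(t)=J_{t}^{(1-\delta)(2-\gamma)}C_{\gamma,\delta}(t)$, verify $\lim_{t\to 0^{+}}S_{\gamma}(t)x=x$ via the substitution $s=t\sigma$ and dominated convergence, and then apply $J_{t}^{(1-\delta)(2-\gamma)}$ to the identity of Proposition~\ref{prop1}(c) together with the semigroup law for fractional integrals to obtain $S_{\gamma}(t)x=x+J_{t}^{\gamma}S_{\gamma}(t)\mathcal{A}x$. Your write-up is in fact more careful than the paper's on issues such as boundedness of $S_{\gamma}(t)$ and the justification of dominated convergence, and you make explicit the key arithmetic identity $(\gamma+\delta(2-\gamma)-1)+(1-\delta)(2-\gamma)=1$ that drives all the exponent cancellations.
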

\begin{proof}
Define $S_{\gamma}(t)=J_{t}^{(1-\delta)(2-\gamma)}C_{\gamma,\delta}(t), t>0$. For $x \in X$,
\begin{eqnarray*}
&&\lim_{t\rightarrow 0+}S_{\gamma}(t)x=\lim_{t\rightarrow 0+}J_{t}^{(1-\delta)(2-\gamma)}C_{\gamma,\delta}(t)x\\
&&\frac{1}{\Gamma((1-\delta)(2-\gamma))}\lim_{t\rightarrow 0+}\int_{0}^{1}u^{\gamma+\delta(2-\gamma)-2}(1-u)^{(2-\gamma)(1-\delta)-1}(tu)^{2-\gamma-\delta(2-\gamma)}C_{\gamma,\delta}(tu)x du\\
&&=x,
\end{eqnarray*}
by dominated convergence theorem.\\
This gives, $S_{\gamma}(0)=I$. Then, $\{S_{\gamma}(t)\}_{t \geq 0}$  is strongly continuous. Now using property \ref{prop1},  we have
$$S_{\gamma}(t)x=J_{t}^{(1-\delta)(2-\gamma)}C_{\gamma,\delta}(t)x=x+J_{t}^{\gamma}J_{t}^{(1-\delta)(2-\gamma)}C_{\gamma,\delta}(t)\mathcal{A}x=x+J_{t}^{\gamma}S_{\gamma}(t)\mathcal{A}x,$$
for all $x \in  D(\mathcal{A})$,
Therefore $\{S_{\gamma}(t)\}_{t \geq 0}$ is a solution operator for the given  differential equation \eqref{eq4.8a}-\eqref{eq4.9a}.
\end{proof}

\begin{lemma}
If $\mathcal{A}$ generates a general fractional cosine operator  function of order $1<\gamma<2$ and type $\delta \in [0,1]$, then it generates general fractional sine function $\{S_{\gamma,\delta}(t)\}_{t \geq 0}$(see \cite{ZDM}) of order $\gamma$ and type $\delta$ also.
\end{lemma}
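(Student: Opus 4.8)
The plan is to realize the general fractional sine function as the once-integrated cosine family and then to verify, one by one, the axioms imposed in \cite{ZDM}. Concretely, writing $\beta=\gamma+\delta(2-\gamma)\in(1,2]$ for brevity (so that property (ii) of $C_{\gamma,\delta}$ reads $\lim_{t\to0+}C_{\gamma,\delta}(t)x/t^{\beta-2}=x/\Gamma(\beta-1)$), I would set
\[
S_{\gamma,\delta}(t):=J_t^{1}C_{\gamma,\delta}(t)=\int_0^t C_{\gamma,\delta}(s)\,ds,\qquad t\ge0,
\]
which is the operator counterpart of the scalar identity $J_t^{1}\bigl[t^{\beta-2}E_{\gamma,\beta-1}(ct^{\gamma})\bigr]=t^{\beta-1}E_{\gamma,\beta}(ct^{\gamma})$, i.e. of the passage in Lemma \ref{lemma1} from the ``$x$''-profile to the ``$y$''-profile. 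The claim to prove is then that $\{S_{\gamma,\delta}(t)\}_{t\ge0}$ is a general fractional sine function of order $\gamma$ and type $\delta$ and that its generator is $\mathcal{A}$.

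First I would settle well-posedness of the definition. Since $C_{\gamma,\delta}(\cdot)x/t^{\beta-2}$ converges as $t\to0+$ for every $x$, the uniform boundedness principle gives $\|C_{\gamma,\delta}(s)\|\le Ms^{\beta-2}$ on $(0,1]$; combined with local boundedness of $\|C_{\gamma,\delta}(\cdot)\|$ on compacta of $(0,\infty)$ and with $\beta-2>-1$, this makes $s\mapsto\|C_{\gamma,\delta}(s)\|$ integrable near $0$, so the Bochner integral above exists and is a bounded operator, $S_{\gamma,\delta}(\cdot)$ is strongly continuous on $[0,\infty)$ with $S_{\gamma,\delta}(0)=0$, and $S_{\gamma,\delta}(\cdot)x\in C^{1}((0,\infty);X)$. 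The boundary behaviour comes from the substitution $s=tu$,
\[
S_{\gamma,\delta}(t)x=t^{\beta-1}\int_0^1 u^{\beta-2}\,\frac{C_{\gamma,\delta}(tu)x}{(tu)^{\beta-2}}\,du,
\]
followed by dominated convergence (the integrand is dominated by $M\|x\|u^{\beta-2}\in L^1(0,1)$) and the Beta-integral $\int_0^1 u^{\beta-2}\,du=1/(\beta-1)$, which yields $\lim_{t\to0+}S_{\gamma,\delta}(t)x/t^{\beta-1}=x/\Gamma(\beta)$; commutativity $S_{\gamma,\delta}(t)S_{\gamma,\delta}(s)=S_{\gamma,\delta}(s)S_{\gamma,\delta}(t)$ is Fubini together with property (iii) of $C_{\gamma,\delta}$.

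The substantive step is the functional equation. I would apply to \eqref{eq4.5} the order-one integral $J_s^{1}$ in the variable $s$ and then $J_t^{1}$ in the variable $t$, using the semigroup law $J^{1}J^{\gamma}=J^{\gamma+1}=J^{\gamma}J^{1}$ and the fact that a fixed bounded operator can be pulled out of a Bochner integral in the other variable; on the left this turns $C_{\gamma,\delta}(s)J_t^{\gamma}C_{\gamma,\delta}(t)$ into $S_{\gamma,\delta}(s)J_t^{\gamma}S_{\gamma,\delta}(t)$ and $J_s^{\gamma}C_{\gamma,\delta}(s)C_{\gamma,\delta}(t)$ into $J_s^{\gamma}S_{\gamma,\delta}(s)S_{\gamma,\delta}(t)$, while on the right $J^{1}[\tau^{\beta-2}]=\tfrac{\Gamma(\beta-1)}{\Gamma(\beta)}\tau^{\beta-1}$ replaces each prefactor $\tau^{\beta-2}/\Gamma(\beta-1)$ by $\tau^{\beta-1}/\Gamma(\beta)$. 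The result is the sine-type relation
\[
S_{\gamma,\delta}(s)J_t^{\gamma}S_{\gamma,\delta}(t)-J_s^{\gamma}S_{\gamma,\delta}(s)S_{\gamma,\delta}(t)=\frac{s^{\beta-1}}{\Gamma(\beta)}J_t^{\gamma}S_{\gamma,\delta}(t)-\frac{t^{\beta-1}}{\Gamma(\beta)}J_s^{\gamma}S_{\gamma,\delta}(s),\qquad t,s>0,
\]
which is \eqref{eq4.5} with $\beta-2$ shifted to $\beta-1$, i.e. exactly the axiom used in \cite{ZDM}. The only friction here is the bookkeeping of the iterated fractional integrals and the (routine) justification of interchanging bounded operators with Bochner integrals.

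Finally I would identify the generator. Applying $J_t^{1}$ to Proposition \ref{prop1}(c) and using $J_t^{1}[t^{\beta-2}]=\tfrac{\Gamma(\beta-1)}{\Gamma(\beta)}t^{\beta-1}$ together with $J_t^{1}J_t^{\gamma}=J_t^{\gamma}J_t^{1}$ gives, for all $x\in D(\mathcal{A})$,
\[
S_{\gamma,\delta}(t)x=\frac{t^{\beta-1}}{\Gamma(\beta)}x+J_t^{\gamma}S_{\gamma,\delta}(t)\mathcal{A}x;
\]
moreover, repeating verbatim the estimate behind \eqref{eqa} (rescaling $s\mapsto s\tau$, Beta-integral, dominated convergence) shows $\Gamma(\beta+\gamma)\lim_{t\to0+}J_t^{\gamma}S_{\gamma,\delta}(t)y/t^{\beta+\gamma-1}=y$ for every $y\in X$. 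Hence, for $x\in D(\mathcal{A})$, the limit $\lim_{t\to0+}\bigl(S_{\gamma,\delta}(t)x-\tfrac{t^{\beta-1}}{\Gamma(\beta)}x\bigr)/t^{\beta+\gamma-1}$ exists and equals $\mathcal{A}x/\Gamma(\beta+\gamma)$, so $\mathcal{A}$ is contained in the generator of $S_{\gamma,\delta}$. For the reverse inclusion I would lean on the structure theory of \cite{ZDM}: the generator of a general fractional sine function is closed, densely defined, and uniquely determined by the resolvent-type Laplace transform of the family; since $S_{\gamma,\delta}=J_t^{1}C_{\gamma,\delta}$ differs from $C_{\gamma,\delta}$ only by the scalar factor $\lambda^{-1}$ in the Laplace domain, this transform recovers the same operator as for $C_{\gamma,\delta}$, namely $\mathcal{A}$ (equivalently, Proposition \ref{prop1}(c) for $S_{\gamma,\delta}$ together with Titchmarsh's theorem forces uniqueness, as in Proposition \ref{prop1}(e)). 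I expect this last point --- upgrading the averaged limit that defines the sine generator back to the un-averaged limit defining $D(\mathcal{A})$ --- to be the genuine obstacle, everything else being a translation of the cosine machinery already in place.
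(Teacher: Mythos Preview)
Your proposal is correct and largely mirrors the paper's argument: both define $S_{\gamma,\delta}(t)=\int_0^t C_{\gamma,\delta}(s)\,ds$, verify the normalisation $\lim_{t\to0+}S_{\gamma,\delta}(t)x/t^{\beta-1}=x/\Gamma(\beta)$ via the same substitution $s=tu$ and dominated convergence, and obtain commutativity from that of $C_{\gamma,\delta}$. The one genuine difference is in the derivation of the sine functional equation. You obtain it by applying $J_s^{1}$ and $J_t^{1}$ directly to the cosine functional equation \eqref{eq4.5}, using only the semigroup law for fractional integrals and the identity $J^{1}[\tau^{\beta-2}/\Gamma(\beta-1)]=\tau^{\beta-1}/\Gamma(\beta)$; this is clean and self-contained. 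The paper instead first establishes the resolvent-type identity \eqref{eq4.6}, $S_{\gamma,\delta}(t)x=\tfrac{t^{\beta-1}}{\Gamma(\beta)}x+\mathcal{A}J_t^{\gamma}S_{\gamma,\delta}(t)x$, by integrating Proposition~\ref{prop1}(b) and invoking closedness of $\mathcal{A}$ to pass $\mathcal{A}$ through the integral, and then derives the functional equation from \eqref{eq4.6}. Your route avoids the detour through $\mathcal{A}$ altogether for that step; the paper's route has the advantage that \eqref{eq4.6} is exactly what one needs anyway to identify the generator, so it does double duty. Your more careful treatment of the generator identification (the inclusion in both directions) goes beyond what the paper writes out, which simply asserts \eqref{eq4.6} and stops.
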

\begin{proof}
Define $S_{\gamma,\delta}(t)=\int_{0}^{t}C_{\gamma,\delta}(s)ds,\; t>0$, then for $x \in X$
\begin{equation}\label{eq4.6}
S_{\gamma,\delta}(t)x=\frac{t^{\gamma+\delta(2-\gamma)-1}}{\Gamma(\gamma+\delta(2-\gamma))}x+\mathcal{A}J_{t}^{\gamma}S_{\gamma,\delta}(t)x.
\end{equation}
Now 
\begin{eqnarray*}
&&\Gamma(\gamma+\delta(2-\gamma))\lim_{t\rightarrow 0+}\frac{S(t)y}{t^{\gamma+\delta(2-\gamma)-1}}=\Gamma(\gamma+\delta(2-\gamma))\lim_{t\rightarrow 0+}\frac{\int_{0}^{t}C_{\gamma,\delta}(s)y ds}{t^{\gamma+\delta(2-\gamma)-1}}\\
&&=\Gamma(\gamma+\delta(2-\gamma))\lim_{t\rightarrow 0+}t^{2-\gamma-\delta(2-\gamma)}\int_{0}^{1}C_{\gamma,\delta}(ts)y ds\\
&&=\Gamma(\gamma+\delta(2-\gamma))\lim_{t\rightarrow 0+}\int_{0}^{1}s^{\gamma+\delta(2-\gamma)-2}(ts)^{2-\gamma-\delta(2-\gamma)}C_{\gamma,\delta}(ts)y ds=y,
\end{eqnarray*}
by dominated convergence theorem.\\
The commutativity of $\{C_{\gamma,\delta}(t)\}_{t>0}$ implies the commutativity of $\{S_{\gamma,\delta}(t)\}_{t>0}$.\\
Using \eqref{eq4.6} and closedness of operator $\mathcal{A}$, we can get\\
$S_{\gamma,\delta}(s)J_{t}^{\gamma}S_{\gamma,\delta}(t)x-J_{s}^{\gamma}S_{\gamma,\delta}(s)S_{\gamma,\delta}(t)x=\frac{s^{\gamma+\delta(2-\gamma)-1}}{\Gamma(\gamma+\delta(2-\gamma))}J_{t}^{\gamma}S_{\gamma,\delta}(t)x-\frac{t^{\gamma+\delta(2-\gamma)-1}}{\Gamma(\gamma+\delta(2-\gamma))}J_{s}^{\gamma}S_{\gamma,\delta}(s)x;\; t, s>0.$\\
Therefore, $\{S_{\gamma,\delta}(t)\}_{t \geq 0}$ is a general fractional sine function.
\end{proof}

\begin{prop}
Define $R(\gamma)=\int_{0}^{\infty}e^{-\gamma t}C_{\gamma,\delta}(t)dt$. Let $R(\gamma)$ exist for some $\gamma=\gamma_{0}$. Then
$$ \gamma^{-\delta(2-\gamma)+1}R(\mu)-\mu^{-\delta(2-\gamma)+1}R(\gamma)=(\gamma^{\gamma}-\mu^{\gamma})R(\mu)R(\gamma) \; \mbox{for}\; \gamma,\mu \geq \gamma_{0}.$$
\end{prop}
\begin{proof}
Taking Laplace transform w.r.t $t$ and $s$ of L.H.S. of \eqref{eq4.5}, we have\\
$\int_{0}^{t}e^{-\gamma t}[C_{\gamma,\delta}(s)J_{t}^{\gamma}C_{\gamma,\delta}(t)-J_{s}^{\gamma}C_{\gamma,\delta}(s)C_{\gamma,\delta}(t)]dt=C_{\gamma,\delta}(s)\gamma^{-\gamma}R(\gamma)-J_{s}^{\gamma}C_{\gamma,\delta}(s)R(\gamma),$
\begin{equation}\label{eq4.11a}
\int_{0}^{t}e^{-\mu s}[C_{\gamma,\delta}(s)\gamma^{-\gamma}R(\gamma)-J_{s}^{\gamma}C_{\gamma,\delta}(s)R(\gamma)]ds=\gamma^{-\gamma}R(\mu)R(\gamma)-\mu^{-\gamma}R(\mu)R(\gamma).
\end{equation}
Taking Laplace transform of R.H.S  of \eqref{eq4.5} w.r.t $t$, we have
$$\gamma^{-\gamma}\frac{s^{\gamma+\delta(2-\gamma)-2}}{\Gamma(\gamma+\delta(2-\gamma)-1)}R(\gamma)-\gamma^{-\gamma-\delta(2-\gamma)+1}J_{s}^{\gamma}C_{\gamma,\delta}(s).$$
Again taking Laplace transform w.r.t  $s$, we have
\begin{equation}\label{eq4.12a}
 \gamma^{-\gamma}\mu^{-\gamma-\delta(2-\gamma)+1}R(\gamma)- \mu^{-\gamma}\gamma^{-\gamma-\delta(2-\gamma)+1}R(\mu).
\end{equation}
Equating the both sides \eqref{eq4.11a} and \eqref{eq4.12a}, we get the result.
\end{proof}
\begin{thm}
Let $\mathcal{A}$ generates a general fractional cosine operator function of order $1<\gamma<2$ and type $\delta \in [0,1]$, and Laplace transform of $C_{\gamma,\delta}(t)$ exists, then
$$R(\gamma^{\gamma},\mathcal{A})x=\gamma^{\delta(2-\gamma)-1}\int_{0}^{\infty}e^{-\gamma t}C_{\gamma,\delta}(t)xdt.$$
\end{thm}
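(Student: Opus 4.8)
The plan is to Laplace-transform the two resolvent-type identities in Proposition~\ref{prop1}, parts (b) and (c), and read off that a scaled Laplace transform of $C_{\gamma,\delta}$ is exactly the resolvent of $\mathcal{A}$ at $\lambda^{\gamma}$. To keep notation straight I write $\lambda$ for the Laplace variable (denoted $\gamma$ in the statement, which clashes with the fractional order), $R(\lambda):=\int_{0}^{\infty}e^{-\lambda t}C_{\gamma,\delta}(t)\,dt$, and $\beta:=\gamma+\delta(2-\gamma)-1$, so that $g_{\beta}(t)=\frac{t^{\gamma+\delta(2-\gamma)-2}}{\Gamma(\gamma+\delta(2-\gamma)-1)}$ has Laplace transform $\lambda^{-\beta}$ and $\mathcal{L}[J_{t}^{\gamma}f](\lambda)=\lambda^{-\gamma}\widehat{f}(\lambda)$. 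I would first note that $\beta-1=\gamma+\delta(2-\gamma)-2>-1$ because $\gamma>1$ and $\delta(2-\gamma)\ge 0$, so by property (ii) of the definition the map $t\mapsto e^{-\lambda t}C_{\gamma,\delta}(t)x$ is Bochner integrable near $t=0$; together with the hypothesis that $R(\lambda)$ exists this makes all Laplace integrals below absolutely convergent for $\lambda$ in the domain of convergence, and $R(\lambda)$ a bounded operator.

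Next I would show $\lambda^{\delta(2-\gamma)-1}R(\lambda)$ is a right inverse of $\lambda^{\gamma}I-\mathcal{A}$. Fix $x\in X$. By Proposition~\ref{prop1}(b), $J_{t}^{\gamma}C_{\gamma,\delta}(t)x\in D(\mathcal{A})$ and $C_{\gamma,\delta}(t)x=g_{\beta}(t)x+\mathcal{A}J_{t}^{\gamma}C_{\gamma,\delta}(t)x$. Applying the Laplace transform and using that $\mathcal{A}$ is closed (Proposition~\ref{prop1}(d)) to pull $\mathcal{A}$ out of the Bochner integral gives $R(\lambda)x=\lambda^{-\beta}x+\mathcal{A}\big(\lambda^{-\gamma}R(\lambda)x\big)$; in particular $R(\lambda)x\in D(\mathcal{A})$. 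Rearranging yields $(\lambda^{\gamma}I-\mathcal{A})R(\lambda)x=\lambda^{\gamma-\beta}x=\lambda^{1-\delta(2-\gamma)}x$, and multiplying by $\lambda^{\delta(2-\gamma)-1}$ gives $(\lambda^{\gamma}I-\mathcal{A})\,\lambda^{\delta(2-\gamma)-1}R(\lambda)x=x$ for every $x\in X$.

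For the left inverse I would use Proposition~\ref{prop1}(c), $C_{\gamma,\delta}(t)x=g_{\beta}(t)x+J_{t}^{\gamma}C_{\gamma,\delta}(t)\mathcal{A}x$ for $x\in D(\mathcal{A})$, together with the commutation $C_{\gamma,\delta}(t)\mathcal{A}x=\mathcal{A}C_{\gamma,\delta}(t)x$ from Proposition~\ref{prop1}(a). Laplace-transforming gives $R(\lambda)x=\lambda^{-\beta}x+\lambda^{-\gamma}R(\lambda)\mathcal{A}x$, i.e.\ $R(\lambda)(\lambda^{\gamma}I-\mathcal{A})x=\lambda^{1-\delta(2-\gamma)}x$, hence $\lambda^{\delta(2-\gamma)-1}R(\lambda)(\lambda^{\gamma}I-\mathcal{A})x=x$ on $D(\mathcal{A})$. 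Combining the two, $\lambda^{\gamma}I-\mathcal{A}\colon D(\mathcal{A})\to X$ is a bijection whose inverse is the bounded operator $\lambda^{\delta(2-\gamma)-1}R(\lambda)$; since $\mathcal{A}$ is closed this says $\lambda^{\gamma}\in\rho(\mathcal{A})$ and $R(\lambda^{\gamma},\mathcal{A})=\lambda^{\delta(2-\gamma)-1}\int_{0}^{\infty}e^{-\lambda t}C_{\gamma,\delta}(t)\,dt$, which is the claimed formula (with $\lambda$ renamed $\gamma$).

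The step I expect to be the main obstacle is the interchange of the closed operator $\mathcal{A}$ with the Laplace (Bochner) integral in the two inverse computations. To justify it one needs both $\int_{0}^{\infty}e^{-\lambda t}J_{t}^{\gamma}C_{\gamma,\delta}(t)x\,dt$ and $\int_{0}^{\infty}e^{-\lambda t}\mathcal{A}J_{t}^{\gamma}C_{\gamma,\delta}(t)x\,dt$ to converge: the first is $\lambda^{-\gamma}R(\lambda)x$, finite by hypothesis, and the second equals $R(\lambda)x-\lambda^{-\beta}x$ by Proposition~\ref{prop1}(b), hence also finite, so the standard fact that a closed operator commutes with a convergent Bochner integral applies. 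A minor additional point is to record the integrability of the $t^{\beta-1}$ singularity at $t=0$, which is precisely the condition $\gamma>1$ noted at the outset.
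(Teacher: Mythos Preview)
Your proof is correct and follows essentially the same approach as the paper: Laplace-transform the identities in Proposition~\ref{prop1}(b) and (c) to obtain the two-sided inverse relation for $\lambda^{\delta(2-\gamma)-1}R(\lambda)$. The paper starts from part (c) on $D(\mathcal{A})$ and then invokes density of $D(\mathcal{A})$ to reach all of $X$, whereas you use part (b) (valid for every $x\in X$) directly for the right-inverse step, which is a slightly cleaner ordering; your explicit handling of the closed-operator/Bochner-integral interchange and of the integrability at $t=0$ is more detailed than what the paper writes, but the argument is the same in substance.
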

\begin{proof}
For $x \in D(\mathcal{A})$, we have
\begin{equation}\label{l1}
C_{\gamma,\delta}(t)x=\frac{t^{\gamma+\delta(2-\gamma)-2}}{\Gamma(\gamma+\delta(2-\gamma)-1)}x+J_{t}^{\gamma}C_{\gamma,\delta}(t)\mathcal{A}x.
\end{equation}
Applying Laplace transform on both sides of \eqref{l1} , we get
$$R(\gamma)x=\gamma^{-\gamma-\delta(2-\gamma)+1}x+\gamma^{-\gamma}R(\gamma)\mathcal{A}x, \forall x \in D(\mathcal{A}).$$
Since $\overline{D(\mathcal{A})}=X$, we have
$$\gamma^{\delta(2-\gamma)-1}R(\gamma)x=\gamma^{-\gamma}x+\gamma^{\delta(2-\gamma)-1}\gamma^{-\gamma}R(\gamma)\mathcal{A}x \; \mbox{on} \; X$$
Hence $$\gamma^{\delta(2-\gamma)-1}R(\gamma)(\gamma^{\gamma}-\mathcal{A})x=x$$
and
$$(I-\gamma^{-\gamma}\mathcal{A})R(\gamma)x=\gamma^{-\gamma-\delta(2-\gamma)+1}x.$$
This gives
$$(\gamma^{\gamma}I-\mathcal{A})\gamma^{\delta(2-\gamma)-1}R(\gamma)x=x.$$
Thus $R(\gamma^{\gamma},\mathcal{A})x=\gamma^{\delta(2-\gamma)-1}\int_{0}^{\infty}e^{-\gamma t}C_{\gamma,\delta}(t)xdt.$
\end{proof}
\begin{remark}
A family  of bounded linear operators $\{C_{\gamma,\delta}(t)\}_{t>0}$ is a solution operator of \eqref{eq4.1}-\eqref{eq4.2} if and only if it is general fractional cosine operator function of order $1<\gamma<2$ and type $\delta \in [0,1]$.
\end{remark}

Consider the problem
\begin{equation}\label{eq4.7}
D_{0}^{\gamma,\delta}\omega(t)=\mathcal{A}\omega(t),\; t>0,
\end{equation}
\begin{equation}\label{eq4.8}
(g^{(1-\delta)(2-\gamma)}*\omega)(0)=x, (g^{(1-\delta)(2-\gamma)}*\omega)^{'}(0)=y,
\end{equation}
in a Banach space $X$ where $\mathcal{A}$ is closed linear operator on $X$.
\begin{definition}
$u \in C((0,\infty),X)$ is a mild solution of the problem \eqref{eq4.7}-\eqref{eq4.8} if $J_{t}^{\gamma}\omega(t) \in D(\mathcal{A})$ and
 $$\omega(t)=\frac{t^{\gamma+\delta(2-\gamma)-2}}{\Gamma(\gamma+\delta(2-\gamma)-1)}x+\frac{t^{\gamma+\delta(2-\gamma)-1}}{\Gamma(\gamma+\delta(2-\gamma))}y+\mathcal{A}J_{t}^{\gamma}\omega(t)\; t\in (0,\infty).$$
\end{definition}
\begin{thm}
Let $\mathcal{A}$ generates a general fractional cosine operator function of order $1<\gamma<2$ and type $\delta [0,1]$. Then for every $x, y\in X$, $C_{\gamma,\delta}(t)x+S_{\gamma,\delta}(t)y$ is the mild solution of  \eqref{eq4.7}-\eqref{eq4.8}.
\end{thm}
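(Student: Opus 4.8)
The plan is to verify directly that $\omega(t):=C_{\gamma,\delta}(t)x+S_{\gamma,\delta}(t)y$ meets the three conditions in the definition of a mild solution of \eqref{eq4.7}--\eqref{eq4.8}: that $\omega\in C((0,\infty),X)$, that $J_{t}^{\gamma}\omega(t)\in D(\mathcal{A})$ for $t>0$, and that $\omega(t)=\frac{t^{\gamma+\delta(2-\gamma)-2}}{\Gamma(\gamma+\delta(2-\gamma)-1)}x+\frac{t^{\gamma+\delta(2-\gamma)-1}}{\Gamma(\gamma+\delta(2-\gamma))}y+\mathcal{A}J_{t}^{\gamma}\omega(t)$. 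Everything reduces, by linearity, to two representation formulas already established above: Proposition~\ref{prop1}(b), which gives $J_{t}^{\gamma}C_{\gamma,\delta}(t)x\in D(\mathcal{A})$ together with $C_{\gamma,\delta}(t)x=\frac{t^{\gamma+\delta(2-\gamma)-2}}{\Gamma(\gamma+\delta(2-\gamma)-1)}x+\mathcal{A}J_{t}^{\gamma}C_{\gamma,\delta}(t)x$ for all $x\in X$; and the identity \eqref{eq4.6}, namely $S_{\gamma,\delta}(t)y=\frac{t^{\gamma+\delta(2-\gamma)-1}}{\Gamma(\gamma+\delta(2-\gamma))}y+\mathcal{A}J_{t}^{\gamma}S_{\gamma,\delta}(t)y$ for all $y\in X$ (whence also $J_{t}^{\gamma}S_{\gamma,\delta}(t)y\in D(\mathcal{A})$).

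First I would dispose of continuity: $C_{\gamma,\delta}(\cdot)x$ is strongly continuous by hypothesis, and $S_{\gamma,\delta}(\cdot)y=\int_{0}^{\cdot}C_{\gamma,\delta}(s)y\,ds$ is continuous as the indefinite Bochner integral of a locally integrable continuous $X$-valued function, so $\omega\in C((0,\infty),X)$. Next, applying the linear operator $J_{t}^{\gamma}$ termwise gives $J_{t}^{\gamma}\omega(t)=J_{t}^{\gamma}C_{\gamma,\delta}(t)x+J_{t}^{\gamma}S_{\gamma,\delta}(t)y$, and since each summand lies in $D(\mathcal{A})$ (by Proposition~\ref{prop1}(b) and \eqref{eq4.6} respectively) and $D(\mathcal{A})$ is a linear subspace, $J_{t}^{\gamma}\omega(t)\in D(\mathcal{A})$. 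Finally, adding Proposition~\ref{prop1}(b) evaluated at $x$ to \eqref{eq4.6} evaluated at $y$ and using linearity of $\mathcal{A}$ on $D(\mathcal{A})$ yields
\[
\omega(t)=\frac{t^{\gamma+\delta(2-\gamma)-2}}{\Gamma(\gamma+\delta(2-\gamma)-1)}x+\frac{t^{\gamma+\delta(2-\gamma)-1}}{\Gamma(\gamma+\delta(2-\gamma))}y+\mathcal{A}\bigl(J_{t}^{\gamma}C_{\gamma,\delta}(t)x+J_{t}^{\gamma}S_{\gamma,\delta}(t)y\bigr),
\]
and since $J_{t}^{\gamma}$ is linear the operand of $\mathcal{A}$ is exactly $J_{t}^{\gamma}\omega(t)$, which is the required identity; this finishes the verification.

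I do not expect a genuine obstacle here: the substantive work was done in Proposition~\ref{prop1} and in the lemma producing \eqref{eq4.6}. The only points one should not skip over are that \eqref{eq4.6} is valid for every $y\in X$ (this is what permits $x,y$ arbitrary in $X$ rather than merely in $D(\mathcal{A})$), which itself relies on the closedness of $\mathcal{A}$ to pull $\mathcal{A}$ out of the Bochner integral $\int_{0}^{t}\mathcal{A}J_{s}^{\gamma}C_{\gamma,\delta}(s)y\,ds$ together with the convolution identity $J_{t}^{\gamma}S_{\gamma,\delta}(t)y=\int_{0}^{t}J_{s}^{\gamma}C_{\gamma,\delta}(s)y\,ds$; and the trivial but necessary remark that $D(\mathcal{A})$ is a subspace so the two domain memberships combine. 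If one additionally wanted to record the boundary data \eqref{eq4.8}, one would invoke the $t\to0+$ limit relations already proved for $C_{\gamma,\delta}$ and $S_{\gamma,\delta}$, but these are not part of the stated mild-solution definition and hence are not needed for the theorem as stated.
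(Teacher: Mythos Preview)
Your proof is correct. The paper states this theorem without proof, and your argument supplies precisely the natural verification: combine Proposition~\ref{prop1}(b) for the $C_{\gamma,\delta}$-term with identity \eqref{eq4.6} for the $S_{\gamma,\delta}$-term, then add and use linearity of $\mathcal{A}$ on $D(\mathcal{A})$. Your remarks on why \eqref{eq4.6} holds for all $y\in X$ (closedness of $\mathcal{A}$) and on $D(\mathcal{A})$ being a subspace are exactly the points one should make explicit, and the paper's authors presumably regarded this as routine enough to omit.
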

\section{Semilinear Abstract Hilfer Cauchy Problem}
We study the  following in-homogeneous problem
\begin{equation}\label{eq5.1}
D_{0}^{\gamma,\delta}\omega(t)=\mathcal{A}\omega(t)+\eta(t,\omega(t)), t \in (0,T],
\end{equation}
\begin{equation}\label{eq5.2}
(g_{(1-\delta)(2-\gamma)}*\omega)(0)=\omega_{1} \in X, (g_{(1-\delta)(2-\gamma)}*\omega)^{'}(0)=\omega_{2} \in X,
\end{equation}
where $\mathcal{A}$ generates a general fractional cosine operator function $C_{\gamma,\delta}(t)$  of order $1<\gamma<2$ and type $0 \leq \delta \leq 1$, such that Laplace transform of $C_{\gamma,\delta}(t)$ exist.\\

Applying Laplace transform on both sides of \eqref{eq5.1}, we get
$$\hat{\omega}(s)=s^{1-\delta(2-\gamma)}R(s^{\gamma},\mathcal{A})\omega_{1}+s^{-\delta(2-\gamma)}R(s^{\gamma},\mathcal{A})\omega_{2}+R(s^{\gamma},\mathcal{A})\hat{\eta}(s).$$
Now taking inverse Laplace transform , we have
$$\omega(t)=C_{\gamma,\delta}(t)\omega_{1}+S_{\gamma,\delta}(t)\omega_{2}+\int_{0}^{t}P_{\gamma,\delta}(t-s)\eta(s,\omega(s))ds,$$
where, $P_{\gamma,\delta}(t)=J_{t}^{1-\delta(2-\gamma)}C_{\gamma,\delta}(t)$.\\
Thus, we define the mild solution of the FDE \eqref{eq5.1}-\eqref{eq5.2} as $\omega \in C((0,T];X)$ such that $\omega$ satisfies
$$\omega(t)=C_{\gamma,\delta}(t)\omega_{1}+S_{\gamma,\delta}(t)\omega_{2}+\int_{0}^{t}P_{\gamma,\delta}(t-s)\eta(s,\omega(s))ds, t\in (0,T].$$
Let $I=[0,T]$ and $I'=(0,T]$, we define a space $Y$ as
$$Y=\{\omega \in C(I',X):\lim_{t \rightarrow 0+}t^{(2-\gamma-\delta(2-\gamma))}\omega(t)\; \mbox{exists and is finite} \}$$
and the norm on $Y$ is defined by  $\lVert \omega \rVert_{Y}=\sup_{t\in I'}\{t^{(1-\delta)(2-\gamma)} \lVert \omega(t)\rVert\}.$\\
Let $y:I\rightarrow X$ and $\omega(t)=t^{(2-\gamma)(\delta-1)}y(t), t\in I'$, then $\omega \in Y$ if and only if $y \in C(I,X)$ and $\lVert \omega \rVert_{Y}=\lVert y\rVert$.\\
Let $B_{r}(I)=\{y\in C(I,X):\lVert y\rVert\leq r\}$ and $B_{r}^{Y}(I')=\{\omega \in Y:\lVert \omega \rVert_{Y} \leq r\}.$
\begin{thm}
$\omega(t)=C_{\gamma,\delta}(t)x+S_{\gamma,\delta}(t)y+\int_{0}^{t}P_{\gamma,\delta}(t-s)\eta(s)ds$ is strong solution of the problem \eqref{eq5.1}-\eqref{eq5.2} for $\eta(t,\omega(t)) = \eta(t)$ if $x,y\in D(\mathcal{A})$ and there exist a function $
\phi \in L^{1}((0,T),X) $ such that $\eta(t)=J_{t}^{\delta(2-\gamma)}\phi(t), \phi(t) \in D(\mathcal{A})$ and $\mathcal{A}\phi(t) \in L^{1}$.
\end{thm}
\begin{proof}
We can write,
\begin{eqnarray*}
&&\int_{0}^{t}P_{\gamma,\delta}(t-s)\eta(s)ds=P_{\gamma,\delta}(t)*\eta(t)=J_{t}^{1-\delta(2-\gamma)}C_{\gamma,\delta}(t)*\eta(t)\\
&&=g_{1-\delta(2-\gamma)}(t)*(C_{\gamma,\delta}(t)*\eta(t)).
\end{eqnarray*}
\begin{eqnarray*}
&&J_{t}^{(1-\delta)(2-\gamma)}(P_{\gamma,\delta}(t)*\eta(t))=J_{t}^{1+(2-\gamma)(1-2\delta)}C_{\gamma,\delta}(t)*\eta(t)\\
&&=J_{t}^{1+(2-\gamma)(1-2\delta)}\bigg[\frac{t^{\gamma+\delta(2-\gamma)-2}}{\Gamma(\gamma+\delta(2-\gamma)-1}+\mathcal{A}J_{t}^{\gamma}C_{\gamma,\delta}(t)\bigg]*\eta(t)\\
&&=J_{t}^{1+(2-\gamma)(1-2\delta)}\bigg[\frac{t^{\gamma+\delta(2-\gamma)-2}}{\Gamma(\gamma+\delta(2-\gamma)-1}*\eta(t)+\mathcal{A}J_{t}^{\gamma}C_{\gamma,\delta}(t)*\eta(t)\bigg]\\
&&=g_{1+(2-\gamma)(1-2\delta)}(t)*\bigg(\frac{t^{\gamma+\delta(2-\gamma)-2}}{\Gamma(\gamma+\delta(2-\gamma)-1}*\eta(t)\bigg)+g_{1+(2-\gamma)(1-2\delta)}(t)*\bigg(\mathcal{A}J_{t}^{\gamma}C_{\gamma,\delta}(t)*\eta(t)\bigg)\\
&&=J_{t}^{2-\delta(2-\gamma)}\eta(t)+J_{t}^{3-2\delta(2-\gamma)}C_{\gamma,\delta}(t)*\mathcal{A}\eta(t)
\end{eqnarray*}
\begin{eqnarray*}
&&\frac{d}{dt}J_{t}^{(1-\delta)(2-\gamma)}(P_{\gamma,\delta}(t)*\eta(t))\\
&&=J_{t}^{1-\delta(2-\gamma)}\eta(t)+J_{t}^{2-2\delta(2-\gamma)}C_{\gamma,\delta}(t)*\mathcal{A}\eta(t)\\
&&=J_{t}\phi(t)+J_{t}^{2-2\delta(2-\gamma)}C_{\gamma,\delta}(t)*\mathcal{A}J_{t}^{\delta(2-\gamma)}\phi(t)\\
&&=J_{t}\phi(t)+J_{t}^{2-\delta(2-\gamma)}C_{\gamma,\delta}(t)*\mathcal{A}\phi(t).
\end{eqnarray*}
\begin{eqnarray*}
&&\frac{d^{2}}{dt^{2}}J_{t}^{(1-\delta)(2-\gamma)}(P_{\gamma,\delta}(t)*\eta(t))\\
&&=\phi(t)+J_{t}^{1-\delta(2-\gamma)}C_{\gamma,\delta}(t)*\mathcal{A}\phi(t)=\phi(t)+P_{\gamma,\delta}(t)*\mathcal{A}\phi(t)
\end{eqnarray*}
\begin{eqnarray*}
&&J_{t}^{\delta(2-\gamma)}\frac{d^{2}}{dt^{2}}J_{t}^{(1-\delta)(2-\gamma)}(P_{\gamma,\delta}(t)*\eta(t))\\
&&=\eta(t)+\mathcal{A}(P_{\gamma,\delta}(t)*\eta(t)).
\end{eqnarray*}
Thus $D_{0}^{\gamma,\delta}\omega(t)=\mathcal{A}\omega(t)+\eta(t)$.
\end{proof}

We assume some hypothesis to deal with the existence results for the problem \eqref{eq5.1}-\eqref{eq5.2}. They are following:
\begin{description}
\item[H1] $\lVert C_{\gamma,\delta}(t) \rVert \leq M t^{\gamma+\delta(2-\gamma)-2},\; t >0$ for a constant $M>0$;
	\item[H2] $P_{\gamma,\delta}(t)(t>0)$ is continuous in the uniform operator topology.
	\item[H3] For each $t\in I'$, $\eta(t,\cdot):X \rightarrow X$ is continuous and for each $x\in X$,  $\eta(\cdot,x):I'\rightarrow X$ is strongly measurable;
	\item[H4]
	$\lVert \eta(t,x)\rVert \leq m(t)$ for a $m\in L(I',\mathbb{R}^{+})$ and all $x \in B_{r}^{Y}(I')$ at a.e.  $t\in [0,T]$;
	\item[H5] there exist a constant $r>0$ such that 
	$$M\lVert \omega_{1}\rVert+\frac{M}{\gamma+\delta(2-\gamma)-1}\lVert \omega_{2}\rVert +\frac{M\Gamma(\gamma+\delta(2-\gamma)-1) T^{1-\delta(2-\gamma)}}{\Gamma(\gamma)}\lVert m \rVert_{L^{1}}=r$$
	\end{description}
\begin{lemma}Assume that $(H1)$ holds, then 
$\lVert S_{\gamma,\delta}(t) \rVert \leq \frac{M}{\gamma+\delta(2-\gamma)-1} t^{\gamma+\delta(2-\gamma)-1}$ and $\lVert P_{\gamma,\delta}(t) \rVert \leq \frac{M\Gamma(\gamma+\delta(2-\gamma)-1)}{\Gamma(\gamma)}t^{\gamma-1}$ for $t>0$.
\end{lemma}
\begin{proof}
Since $S_{\gamma,\delta}(t)=\int_{0}^{t}C_{\gamma,\delta}(s)ds$ and $P_{\gamma,\delta}(t)=J_{t}^{1-\delta(2-\gamma)}C_{\gamma,\delta}$, using Hypothesis (H1) we have
$$\lVert S_{\gamma,\delta}(t) \rVert \leq M\int_{0}^{t}s^{\gamma+\delta(2-\gamma)-2}ds \leq \frac{M t^{\gamma+\delta(2-\gamma)-1}}{\gamma+\delta(2-\gamma)-1}$$
and
\begin{eqnarray*}
\lVert P_{\gamma,\delta}(t) \rVert &=&\bigg\lVert \frac{1}{\Gamma(1-\delta(2-\gamma))}\int_{0}^{t}(t-s)^{-\delta(2-\gamma)}C_{\gamma,\delta}(s)ds \bigg\rVert\\
& \leq &\frac{M}{\Gamma(1-\delta(2-\gamma))}\int_{0}^{t}(t-s)^{-\delta(2-\gamma)} s^{\gamma+\delta(2-\gamma)-2}ds\\
&=& \frac{M\Gamma(\gamma+\delta(2-\gamma)-1)}{\Gamma(\gamma)}t^{\gamma-1}.
\end{eqnarray*}
\end{proof}

For any $u\in B_{r}^{Y}$, we define an operator $S$ as follows
$$ (Su)(t)=(S_{1}u)(t)+(S_{2}u)(t)$$
where
$$ (S_{1}u)(t)=C_{\gamma,\delta}(t)\omega_{1}+S_{\gamma,\delta}(t)\omega_{2}, \; \mbox{for}\; t\in (0,T],$$
$$ (S_{2}u)(t)=\int_{0}^{t}P_{\gamma,\delta}(t-s)\eta(s,\omega(s))ds , \; \mbox{for}\; t\in (0,T].$$
We can see easily that $\lim_{t\rightarrow 0+}t^{2-\gamma-\delta(2-\gamma)}(S_{1}u)(t)=\frac{\omega_{1}}{\Gamma(\gamma+\delta(2-\gamma)-1)}$ and  $\lim_{t\rightarrow 0+}t^{2-\gamma-\delta(2-\gamma)}(S_{2}u)(t)=0$. For $y\in B_{r}(I)$, set $\omega(t)=t^{\gamma+\delta(2-\gamma)-2}y(t)$ for $t\in (0, T]$. Then $u \in B_{r}^{Y}$.\\
We define an operator $\mathfrak{S}$ as follows
$$ (\mathfrak{S}y)(t)=(\mathfrak{S}_{1}y)(t)+(\mathfrak{S}_{2}y)(t),$$
where\\
$ (\mathfrak{S}_{1}y)(t)=
\begin{array}{cc}
\bigg\{ & 
\begin{array}{cc}
t^{2-\gamma-\delta(2-\gamma)}(S_{1}u)(t) & t\in (0,T]\\
\frac{\omega_{1}}{\Gamma(\gamma+\delta(2-\gamma)-1)} & \mbox{for}\; t=0,
\end{array}
\end{array}
$
\begin{center}
and
\end{center}
$ (\mathfrak{S}_{2}y)(t)=
\begin{array}{cc}
\bigg\{ & 
\begin{array}{cc}
t^{2-\gamma-\delta(2-\gamma)}(S_{2}u)(t) & t\in (0,T]\\
0 & \mbox{for}\; t=0,
\end{array}
\end{array}
$
\begin{lemma}
For $t>0$, $S_{\gamma,\delta}(t)$ and $P_{\gamma,\delta}(t)$ are strongly continuous.
\end{lemma}
\begin{proof}
For $x\in X$  and $0<t_{1}<t_{2}\leq T$, we have
\begin{eqnarray*}
&&\lVert S_{\gamma,\delta}(t_{2})x-S_{\gamma,\delta}(t_{1})x\rVert = \bigg\lVert \int_{t_{1}}^{t_{2}}C_{\gamma,\delta}(s)xds \bigg\rVert \leq M\int_{t_{1}}^{t_{2}}s^{\gamma+\delta(2-\gamma)-2}\lVert x\rVert ds\\
&& = \frac{M}{\Gamma(\gamma+\delta(2-\gamma))}(t_{2}^{\gamma+\delta(2-\gamma)-1}-t_{1}^{\gamma+\delta(2-\gamma)-1}) \rightarrow 0\; \mbox{as} \;t_{2} \rightarrow t_{1},
\end{eqnarray*}
\begin{eqnarray*}
&&\lVert P_{\gamma,\delta}(t_{2})x-P_{\gamma,\delta}(t_{1})x\rVert= \\
&& \bigg\lVert \frac{1}{\Gamma(1-\delta(2-\gamma))}\int_{0}^{t_{2}}(t_{2}-s)^{-\delta(2-\gamma)}C_{\gamma,\delta}(s)xds-\frac{1}{\Gamma(1-\delta(2-\gamma))}\int_{0}^{t_{1}}(t_{1}-s)^{-\delta(2-\gamma)}C_{\gamma,\delta}(s)xds  \bigg\rVert\\
&& \leq \bigg\lVert \frac{1}{\Gamma(1-\delta(2-\gamma))}\int_{t_{1}}^{t_{2}}(t_{2}-s)^{-\delta(2-\gamma)}C_{\gamma,\delta}(s)xds\bigg \rVert + \\
&& \bigg\lVert \frac{1}{\Gamma(1-\delta(2-\gamma))}\int_{0}^{t_{1}}((t_{2}-s)^{-\delta(2-\gamma)}-(t_{1}-s)^{-\delta(2-\gamma)})C_{\gamma,\delta}(s)xds \bigg \rVert\\
&& := I_{1}+I_{2}
\end{eqnarray*}
\begin{eqnarray*}
&& I_{1}\leq \frac{M}{\Gamma(1-\delta(2-\gamma))\Gamma(\gamma+\delta(2-\gamma)-1)}\int_{t_{1}}^{t_{2}}(t_{2}-s)^{-\delta(2-\gamma)}s^{\gamma+\delta(2-\gamma)-2}\lVert x \rVert ds\\
&& \leq \frac{M t_{1}^{\gamma+\delta(2-\gamma)-2}}{\Gamma(1-\delta(2-\gamma))\Gamma(\gamma+\delta(2-\gamma)-1)}\int_{t_{1}}^{t_{2}}(t_{2}-s)^{-\delta(2-\gamma)}\lVert x \rVert ds \\
&&=\frac{M t_{1}^{\gamma+\delta(2-\gamma)-2}}{\Gamma(1-\delta(2-\gamma))\Gamma(\gamma+\delta(2-\gamma)-1)}\frac{(t_{2}-t_{1})^{1-\delta(2-\gamma)}}{1-\delta(2-\gamma)} \rightarrow 0\; \mbox{as} \;t_{2} \rightarrow t_{1}.     
\end{eqnarray*}
\begin{eqnarray*}
&& I_{2} \leq \frac{M}{\Gamma(1-\delta(2-\gamma))\Gamma(\gamma+\delta(2-\gamma)-1)}\int_{0}^{t_{1}}|(t_{2}-s)^{-\delta(2-\gamma)}-(t_{1}-s)^{-\delta(2-\gamma)}|s^{\gamma+\delta(2-\gamma)-2}\lVert x \rVert ds
\end{eqnarray*}
Noting that $|(t_{2}-s)^{-\delta(2-\gamma)}-(t_{1}-s)^{-\delta(2-\gamma)}|s^{\gamma+\delta(2-\gamma)-2}\leq 2(t_{1}-s)^{-\delta(2-\gamma)}s^{\gamma+\delta(2-\gamma)-2}$ and $\int_{0}^{t_{1}}(t_{1}-s)^{-\delta(2-\gamma)}s^{\gamma+\delta(2-\gamma)-2}ds$ exists, then by Lebesgue's dominated convergence theorem, we have $I_{2}  \rightarrow 0$ as $t_{2} \rightarrow t_{1}$.

\end{proof}

\begin{lemma}\label{lemma7}
Assume that (H1)-(H4) hold, then $\{\mathfrak{S}y: y\in B_{r}(I)\}$ is equicontinuous.
\end{lemma}
\begin{proof}
Let $t_{1}=0$ and $0 <t_{2}\leq T$,
\begin{eqnarray*}
&& \lVert \mathfrak{S}y(t_{2})-\mathfrak{S}y(0)\rVert \leq \lVert \mathfrak{S_{1}}y(t_{2})-\mathfrak{S_{1}}y(0)\rVert +\lVert \mathfrak{S_{2}}y(t_{2})-\mathfrak{S_{2}}y(0)\rVert\\
&& \rightarrow 0\; \mbox{as} \;t_{2} \rightarrow t_{1}
\end{eqnarray*}
Now, for $0<t_{1}<t_{2} \leq T$, we have
\begin{eqnarray*}
&& \lVert \mathfrak{S}y(t_{2})-\mathfrak{S}y(t_{1})\rVert \leq \lVert t_{2}^{2-\gamma-\delta(2-\gamma)}C_{\gamma,\delta}(t_{2})\omega_{1}-t_{1}^{2-\gamma-\delta(2-\gamma)}C_{\gamma,\delta}(t_{1})\omega_{1}\rVert\\
&& + \lVert t_{2}^{2-\gamma-\delta(2-\gamma)}S_{\gamma,\delta}(t_{2})\omega_{2}-t_{1}^{2-\gamma-\delta(2-\gamma)}S_{\gamma,\delta}(t_{1})\omega_{2}\rVert+ \\
&& \lVert t_{2}^{2-\gamma-\delta(2-\gamma)}\int_{0}^{t_{2}}P_{\gamma,\delta}(t_{2}-s)\eta(s,\omega(s))ds-t_{1}^{2-\gamma-\delta(2-\gamma)}\int_{0}^{t_{1}}P_{\gamma,\delta}(t_{1}-s)\eta(s,\omega(s))ds\rVert\\
&& := I_{1}+I_{2}+I_{3}.
\end{eqnarray*}
From strong continuity of $C_{\gamma,\delta}(t)$ and $S_{\gamma,\delta}(t)$, we get $I_{1}, I_{2} \rightarrow 0$ as $t_{2} \rightarrow t_{1}$.	
\begin{eqnarray*}
&& I_{3}\leq \lVert t_{2}^{2-\gamma-\delta(2-\gamma)}\int_{t_{1}}^{t_{2}}P_{\gamma,\delta}(t_{2}-s)\eta(s,\omega(s))ds\rVert +\\
&& \lVert t_{2}^{2-\gamma-\delta(2-\gamma)}\int_{0}^{t_{1}}P_{\gamma,\delta}(t_{2}-s)\eta(s,\omega(s))ds- t_{1}^{2-\gamma-\delta(2-\gamma)}\int_{0}^{t_{1}}P_{\gamma,\delta}(t_{2}-s)\eta(s,\omega(s))ds \rVert\\
&& + \lVert t_{1}^{2-\gamma-\delta(2-\gamma)}\int_{0}^{t_{1}}P_{\gamma,\delta}(t_{2}-s)\eta(s,\omega(s))ds- t_{1}^{2-\gamma-\delta(2-\gamma)}\int_{0}^{t_{1}}P_{\gamma,\delta}(t_{1}-s)\eta(s,\omega(s))ds\rVert\\
&& :=I_{31}+I_{32}+I_{33}.
\end{eqnarray*}
Using the Hypothesis (H4), we have
\begin{eqnarray*}
&& I_{31} \leq M t_{2}^{2-\gamma-\delta(2-\gamma)}\int_{t_{1}}^{t_{2}}(t_{2}-s)^{\gamma-1}m(s)ds \leq M t_{2}^{2-\gamma-\delta(2-\gamma)}(t_{2}-t_{1})^{\gamma-1}\int_{t_{1}}^{t_{2}}m(s)ds\\
&& \quad \rightarrow 0 \; \mbox{as}\; t_{2} \rightarrow t_{1}.
\end{eqnarray*}
\begin{eqnarray*}
&& I_{32} \leq M |t_{2}^{2-\gamma-\delta(2-\gamma)}-t_{1}^{2-\gamma-\delta(2-\gamma)}|\int_{0}^{t_{1}}(t_{2}-s)^{\gamma-1}m(s)ds\\ && \leq |t_{2}^{2-\gamma-\delta(2-\gamma)}-t_{1}^{2-\gamma-\delta(2-\gamma)}|t_{2}^{\gamma-1}\int_{0}^{t_{1}}m(s)ds\\
&& \quad \rightarrow 0 \; \mbox{as}\; t_{2} \rightarrow t_{1}.
\end{eqnarray*}
Using Hypotheses (H2) and (H4), we have
\begin{eqnarray*}
&& I_{33} \leq t_{1}^{2-\gamma-\delta(2-\gamma)}\int_{0}^{t_{1}}\lVert P_{\gamma,\delta}(t_{2}-s)-P_{\gamma,\delta}(t_{1}-s)\rVert m(s)ds\\
&& =t_{1}^{2-\gamma-\delta(2-\gamma)}\bigg(\int_{0}^{t_{1}-\epsilon}\lVert P_{\gamma,\delta}(t_{2}-s)-P_{\gamma,\delta}(t_{1}-s)\rVert m(s)ds+\int_{t_{1}-\epsilon}^{t_{1}}\lVert P_{\gamma,\delta}(t_{2}-s)-P_{\gamma,\delta}(t_{1}-s)\rVert m(s)ds\bigg)\\
&& \leq t_{1}^{2-\gamma-\delta(2-\gamma)}\sup_{s\in  [0,t_{1}-\epsilon]}\lVert P_{\gamma,\delta}(t_{2}-s)-P_{\gamma,\delta}(t_{1}-s)\rVert \int_{0}^{t_{1}}m(s)ds+t_{1}^{2-\gamma-\delta(2-\gamma)}C\int_{t_{1}-\epsilon}^{t_{1}}m(s)ds
\end{eqnarray*}
$\rightarrow 0$ as $t_{2} \rightarrow t_{1}$.
\end{proof}
\begin{lemma}
Assume that (H1)-(H5) hold, then $\mathfrak{S}$ maps $B_{r}(I)$ into $B_{r}(I)$, and  $\mathfrak{S}$ is continuous in $B_{r}(I)$.
\end{lemma}
\begin{proof}
Let $w \in B_{r}^{Y}(I') \implies \lVert w\rVert_{Y} \leq r$, clearly $S\omega(t) \in C(I',X)$ and
\begin{eqnarray*}
	&& t^{(2-\gamma)(1-\delta)}\lVert S\omega(t)\rVert \leq t^{(2-\gamma)(1-\delta)}(\lVert C_{\gamma,\delta}(t)\omega_{1}\rVert+\lVert S_{\gamma,\delta}(t)\omega_{1}\rVert+\int_{0}^{t}\lVert P_{\gamma,\delta}(t)\rVert m(s)ds)\\
	&& \leq M\lVert \omega_{1}\rVert+\frac{M}{\gamma+\delta(2-\gamma)-1}\lVert \omega_{2}\rVert +\frac{M \Gamma(\gamma+\delta(2-\gamma)-1)t^{(2-\gamma)(1-\delta)}}{\Gamma(\gamma)}\int_{0}^{t}(t-s)^{\gamma-1}m(s)ds\\
	&& \leq M\lVert \omega_{1}\rVert+\frac{M}{\gamma+\delta(2-\gamma)-1}\lVert \omega_{2}\rVert +\frac{M \Gamma(\gamma+\delta(2-\gamma)-1)t^{(2-\gamma)(1-\delta)}t^{\gamma-1}}{\Gamma(\gamma)}\int_{0}^{t}m(s)ds\\
	&& \leq \frac{M}{\Gamma(\gamma+\delta(2-\gamma)-1)}\lVert \omega_{1}\rVert+\frac{M}{\Gamma(\gamma+\delta(2-\gamma))}\lVert \omega_{2}\rVert +\frac{M \Gamma(\gamma+\delta(2-\gamma)-1)T^{1-\delta(2-\gamma)}}{\Gamma(\gamma)}\lVert m \rVert_{L^{1}}=r
\end{eqnarray*}
Hence $\mathfrak{S}$ maps $B_{r}(I)$ into $B_{r}(I)$.
Now, we prove that  $\mathfrak{S}$ is continuous in $B_{r}(I)$.\\
Let $y_{m}, y \in B_{r}(I)$ such that $\lim_{m\rightarrow \infty}y_{m}=y$, then we have $\lim_{m\rightarrow \infty}y_{m}(t)=y(t)$ and $ \lim_{m\rightarrow \infty}w_{m}(t)=\omega(t)$, for $t \in (0,T]$ where $w_{m}(t)=t^{\gamma+\delta(2-\gamma)-2}y_{m}(t)$ and $\omega(t)=t^{\gamma+\delta(2-\gamma)-2}y(t)$.\\
Now for $t\in [0,T]$ 
\begin{eqnarray*}
&& \lVert (\mathfrak{S}y_{m})(t)-(\mathfrak{S}y)(t)\rVert= t^{2-\gamma-\delta(2-\gamma)}\bigg\lVert \int_{0}^{t} P_{\gamma,\delta}(t-s)(\eta(s,w_{m}(s))-\eta(s,\omega(s)) ds\bigg\rVert \\
&& \leq t^{2-\gamma-\delta(2-\gamma)}\frac{M}{\Gamma(\gamma)}\int_{0}^{t}(t-s)^{\gamma-1}\lVert \eta(s,w_{m}(s))-\eta(s,\omega(s)\rVert ds.
\end{eqnarray*}
$(t-s)^{\gamma-1}\lVert \eta(s,w_{m}(s))-\eta(s,\omega(s)\rVert \leq 2T^{\gamma-1}m(s)$, which is integrable. Hence by Lebesuge dominated convergence theorem, we have $\lVert (\mathfrak{S}y_{m})(t)-(\mathfrak{S}y)(t)\rVert \rightarrow 0$. But Lemma \ref{lemma7} ensures the continuity of $\mathfrak{S}$.
\end{proof}
\begin{description}
\item[H6] $\psi(\eta(t,\mathcal{D}))\leq k\psi(\mathcal{D})$, a.e. $t \in [0,T]$, for any bounded subset $\mathcal{D}$ of $X$.
\end{description}
\begin{thm}\label{theorem5}
Under the assumption (H1)-(H6), the Cauchy problem \eqref{eq5.1}-\eqref{eq5.2} has at least one mild solution in $B_{r}^{Y}(I')$.
\end{thm}
\begin {proof}
Let $\mathfrak{B}_{0}\subset B_{r}(I)$\\
Define $\mathfrak{S}^{(1)}(B_{0})=\mathfrak{S}(B_{0}), \mathfrak{S}^{(n)}(\mathfrak{B}_{0})=\mathfrak{S}(\overline{co}(\mathfrak{S}^{(n-1)}(\mathfrak{B}_{0}))), n=2,3 \ldots$.\\
Making use of proposition  \ref{prop2.7}-\ref{prop2.9}, we are able to get a sequence $\{y_{n}^{(1)}\}$ in $\mathfrak{B}_{0}$ for any $\epsilon>0$ such that
\begin{eqnarray*}
        &&\psi(\mathfrak{S}^{(1)}(\mathfrak{B}_{0}(t)))\\ 
&&\leq2\psi\bigg(t^{(1-\delta)(2-\gamma)}\int_{0}^{t}P_{\gamma,\delta}(t-s)\eta(s,\{s^{-(2-\gamma)(1-\delta)}y_{n}^{(1)}(s)\}_{n=1}^{\infty})ds\bigg)+\epsilon\\
  &&      \leq
4M_{\gamma,\delta}t^{(2-\gamma)(1-\delta)}\int_{0}^{t}(t-s)^{\gamma-1}\psi(\eta(s,\{s^{-(2-\gamma)(1-\delta)}y_{n}^{(1)}(s)\}_{n=1}^{\infty}))ds+\epsilon\\
    &&    \leq
4M_{\gamma,\delta}kt^{(2-\gamma)(1-\delta)}\psi(\mathfrak{B}_{0})\int_{0}^{t}(t-s)^{\gamma-1}s^{(\gamma-2)(1-\delta)}ds+\epsilon\\
   &&     =4M_{\gamma,\delta}kt^{\gamma}\psi(\mathfrak{B}_{0})\frac{\Gamma(\gamma)\Gamma((\gamma+\delta(2-\gamma)-1)}{\Gamma(2\gamma+\delta(2-\gamma)-1)}+\epsilon.
                \end{eqnarray*}
       
                Since $\epsilon >0$ is arbitrary, we have
                \begin{equation*}
                        \psi(\mathfrak{S}^{(1)}(\mathfrak{B}_{0}(t)))\leq4M_{\gamma,\delta}kt^{\gamma}\psi(\mathfrak{B}_{0})\frac{\Gamma(\gamma)\Gamma((\gamma+\delta(2-\gamma)-1)}{\Gamma(2\gamma+\delta(2-\gamma)-1)}.
\end{equation*}
Again using proposition  \ref{prop2.7}-\ref{prop2.9}, we are able to get a sequence $\{y_{n}^{(2)}\}$ in $\overline{co}(B_{0})$ for any $\epsilon>0$ such that
 \begin{eqnarray*}
    &&    \psi(\mathfrak{S}^{(2)}(\mathfrak{B}_{0}(t)))=\psi(\mathfrak{S}(\overline{co}(\mathfrak{S}^{(1)}(\mathfrak{B}_{0}(t)))))\\
&&\leq
2\psi\bigg(t^{(2-\gamma)(1-\delta)}\int_{0}^{t}P_{\gamma,\delta}(t-s)\eta(s,\{s^{-(2-\gamma)(1-\delta)}y_{n}^{(2)}(s)\}_{n=1}^{\infty})ds\bigg)+\epsilon\\
&&  \leq
4M_{\gamma,\delta}t^{(2-\gamma)(1-\delta)}\int_{0}^{t}(t-s)^{\gamma-1}\psi(\eta(s,\{s^{-(2-\gamma)(1-\delta)}y_{n}^{(2)}(s)\}_{n=1}^{\infty}))ds+\epsilon\\
&& \leq
4M_{\gamma,\delta}kt^{(2-\gamma)(1-\delta)}\int_{0}^{t}(t-s)^{\gamma-1}\psi(\{s^{-(2-\gamma)(1-\delta)}y_{n}^{(2)}(s)\}_{n=1}^{\infty})ds+\epsilon\\
&&\leq
4M_{\gamma,\delta}kt^{(2-\gamma)(1-\delta)}\int_{0}^{t}(t-s)^{\gamma-1}s^{-(2-\gamma)(1-\delta)}\psi(\{y_{n}^{(2)}(s)\}_{n=1}^{\infty})ds+\epsilon\\
&&\leq\frac{(4M_{\gamma,\delta}k)^{2}t^{(2-\gamma)(1-\delta)}\Gamma(\gamma)\Gamma(\gamma+\delta(2-\gamma)-1)}{\Gamma(2\gamma+\delta(2-\gamma)-1)}\psi(\mathfrak{B}_{0})\int_{0}^{t}(t-s)^{\gamma-1}s^{2\gamma+\delta(2-\gamma)-2}ds+\epsilon\\
&&=\frac{(4M_{\gamma,\delta}k)^{2}t^{2\gamma}\Gamma^{2}(\gamma)\Gamma(\gamma+\delta(2-\gamma)-1)}{\Gamma(3\gamma+\delta(2-\gamma)-1)}\psi(\mathfrak{B}_{0})+\epsilon.
                \end{eqnarray*}
       
               We can prove by mathematical induction that
                \begin{equation*}
                        \psi(\mathfrak{S}^{(n)}(\mathfrak{B}_{0}(t)))\leq
\frac{(4kM_{\gamma,\delta})^{n}t^{n\gamma}\Gamma^{n}(\gamma)\Gamma(\gamma+\delta(2-\gamma)-1)}{\Gamma((n+1)\gamma+\delta(2-\gamma)-1)}\psi(\mathfrak{B}_{0}),\; n \in
\mathbb{N}.
                \end{equation*}
                Since \\
                \indent $\lim_{n\rightarrow
\infty}\frac{(4kM_{\gamma,\delta})^{n}t^{n\gamma}\Gamma^{n}(\gamma)\Gamma(\gamma+\delta(2-\gamma)-1)}{\Gamma((n+1)\gamma+\delta(2-\gamma)-1)}=0$,\\
                there exists a constant $n_{0}$ such that
                \begin{eqnarray*}
                        &&\frac{(4kM_{\gamma,\delta})^{n}t^{n_{0}\gamma}\Gamma^{n_{0}}(\gamma)\Gamma(\gamma+\delta(2-\gamma)-1)}{\Gamma((n_{0}+1)\gamma+\delta(2-\gamma)-1)}\\
                        && \quad \leq \frac{(4kM_{\gamma,\delta})^{n}T^{n_{0}\gamma}\Gamma^{n_{0}}(\gamma)\Gamma(\gamma+\delta(2-\gamma)-1)}{\Gamma((n_{0}+1)\gamma+\delta(2-\gamma)-1)}=p<1.
                \end{eqnarray*}
                Then\\
                \indent $\psi(\mathfrak{T}^{(n_{0})}(\mathfrak{B}_{0}(t)))\leq p\psi(\mathfrak{B}_{0}).$\\
             Since 
$\mathfrak{T}^{(n_{0})}(\mathfrak{B}_{0}(t))$ is equicontinuous and bounded (see- Proposition 1.26 of \cite{YZ},  using Proposition \ref{prop2.7}, we have\\
                \indent
$\psi(\mathfrak{T}^{n_{0}}(\mathfrak{B}_{0}))=\max_{t\in[0,T]}\psi(\mathfrak{T}^{n_{0}}(\mathfrak{B}_{0}(t))).$\\
                Hence,\\
                \indent  $\psi(\mathfrak{T}^{n_{0}}(\mathfrak{B}_{0}))\leq p\psi(\mathfrak{B}_{0}).$'
                theorem gives a fixed point $y^{*}$
in $\mathcal{B}_{r}(I)$ of $\mathfrak{S}$ . Let $w^{*}(t)=t^{(2-\gamma)(\delta-1)}y^{*}(t)$. Then
$w^{*}(t)$ is a mild solution of  \eqref{eq5.1}-\eqref{eq5.2}.
\end{proof}
\begin{description}
\item[H7]  For any $w, v\in B_{r}^{Y}(I')$ and $t \in (0,T]$ we have
$$\lVert \eta(t,\omega(t))-\eta(t,v(t))\rVert \leq k \lVert w-v\rVert_{Y},$$
where  $k>0$ is a constant.
\end{description}
\begin{thm}
Assume that $(H1),(H3)-(H5)$  and $(H7)$ holds. Then the problem   \eqref{eq5.1}-\eqref{eq5.2} has a unique mild solution for every $\omega_{1},\omega_{2} \in X$, if $\frac{M}{\Gamma(\gamma)}T^{2-\delta(2-\gamma)}k<1$
\end{thm}
\begin{proof}
Let $y_{1}(t)=t^{2-\gamma-\delta(2-\gamma)}\omega(t)$ and $y_{2}^(t)=t^{2-\gamma-\delta(2-\gamma)}v(t)$.
\begin{eqnarray*}
	&& \lVert (\mathfrak{S}y_{1})(t)-(\mathfrak{S}y_{2})(t)\rVert= t^{2-\gamma-\delta(2-\gamma)}\lVert \int_{0}^{t} P_{\gamma,\delta}(t-s)(\eta(s,\omega(s))-\eta(s,v(s))\rVert\\
	&& \leq t^{2-\gamma-\delta(2-\gamma)}\frac{M}{\Gamma(\gamma)}\int_{0}^{t}(t-s)^{\gamma-1}\lVert \eta(s,\omega(s))-\eta(s,v(s)\rVert\\
	&& \leq t^{2-\delta(2-\gamma)}\frac{M}{\Gamma(\gamma)}k \lVert w-v\rVert_{Y}\\
	&& \leq T^{2-\delta(2-\gamma)}\frac{M}{\Gamma(\gamma)}k \lVert y_{1}-y_{2}\rVert
\end{eqnarray*}
Hence unique mild solution exist by Banach contraction theorem.
\end{proof}
\section{Example}
Let $X=L^{2}([0,\pi],\mathbb{R})$. We consider a FPDE with Hilfer fractional derivative
\begin{equation}\label{exhilfer2eq1}
D_{0}^{\gamma,\delta}\omega(x,t)=\frac{\partial^{2}}{\partial x^{2}}\omega(x,t)+\eta(t,\omega(x,t))
\end{equation}
\begin{equation}\label{exhilfer2eq2}
\omega(0,t)=0=\omega(\pi,t)
\end{equation}
\begin{equation}\label{exhilfer2eq3}
(g^{(1-\delta)(2-\gamma)}*\omega(x,t))(t=0)=\omega_{1}(x), (g^{(1-\delta)(2-\gamma)}*\omega)^{'}(t=0)=\omega_{2}(x)
\end{equation}
where $\eta$ is a given function.\\
To convert the problem \eqref{exhilfer2eq1}-\eqref{exhilfer2eq3} into a abstract setup in $X$, we  define an operator $\mathcal{A}$ by $\mathcal{A}w=w^{\prime \prime}$ with the domain $$D(\mathcal{A})=\{w()\in X: w,w^{\prime}\;\mbox{is absolutely continuous and}\; w^{\prime \prime}\in X, \mbox{with}\;\omega(0)=w(\pi)=0\}.$$
$\mathcal{A}$ has eigen values $\{\gamma_{n}=-n^{2}\}$ with eigen functions $\{\sin nx, n=1,2, \ldots\}.$  
We define the family of operators $\{C_{\gamma,\delta}(t)\}, \{S_{\gamma,\delta}(t)\}$ as follows
$$(C_{\gamma,\delta}(t)g)(x)=\sum_{n=1}^{\infty} t^{\gamma+\delta(2-\gamma)-2}E_{\gamma,\gamma+\delta(2-\gamma)-1}(-n^{2}t^{\gamma})g_{n}\sin nx,$$
$$(S_{\gamma,\delta}(t)g)(x)=\sum_{n=1}^{\infty} t^{\gamma+\delta(2-\gamma)-1}E_{\gamma,\gamma+\delta(2-\gamma)}(-n^{2}t^{\gamma})g_{n}\sin nx,$$
where $E_{\gamma,\gamma}(\cdot)$ is the Mittag-Leffler function.  \\
The operator $\mathcal{A}$ is a sectorial operator of type $(M,\theta,\gamma,\mu)$. We can easily calculate to get a constant $M>0$ such that $\lVert C_{\gamma,\delta}(t)\rVert \leq M t^{\gamma+\delta(2-\gamma)-2}$. It is well known from \cite{LPP} that $P_{\gamma,\delta}(t)$ is continuous in the uniform operator topology. Consider $\eta(t,\omega)=t\sin(\omega)$. Then $r=M\lVert \omega_{1}\rVert+\frac{M}{\gamma+\delta(2-\gamma)-1}\lVert \omega_{2} \rVert|\frac{M\Gamma(\gamma+\delta(2-\gamma))}{2\Gamma(\gamma)}$. Then Theorem \ref{theorem5} guarantees the existence of a mild solution.


\begin{thebibliography}{99}

\bibitem{MPJ} Mei, Z.D., Peng, J.G., Jia, J.X.: A new characteristic property of Mittag-Leffler functions and fractional cosine functions. Studia Mathematica 220 (2), 119--140 (2014).
\bibitem{MPZ} Mei, Z.D., Peng, J.G., Zhang, Y., An operator theoretical approach to Riemann-Liouville fractional Cauchy problem. Math. Nachr. 288, No. 7, 784--797 (2015). 
\bibitem{ZDM} Mei, Z.D., Peng, J.G., Gao, J.H.: General fractional differential equations of order $\alpha \in (1,2)$ and type $\beta \in [0,1]$in Banach spaces. Semigroup Forum 94, 712--737 (2017).
\bibitem{XQW} Shu, X. B., Wang, Q., The existence and uniqueness of mild solutions for fractional differential equations with nonlocal conditions of order $1<\gamma<2$. Computers and Mathematics with Applications 64(2012) 2100--2110.
\bibitem{ZLW} Zhu, B., Liu, L., Wu, Y., Local and global existence of mild solutions for a class of semilinear fractional integro-differential equations. Fract. Calc. Appl. Anal., Vol. 20, No 6 (2017), pp. 1338--1355.

\bibitem{WS} Wang, X., Shu, X., The existence of positive mild solutions for fractional differential evolution equations with nonlocal conditions of order $1<\gamma <2$. Advances in Difference Equations 159 (2015).
\bibitem{LP}Lizama, C., Poblete, F.: On a functional equation associated with $(a,k)$-regularized resolvent families. Abstr. Appl. Anal. Article ID 495487 (2012)
\bibitem{GL} Guo, D.J. ,  Lakshmikantham,  V., Liu,  X.Z.: Nonlinear integral equations in abstract spaces.  Kluwer Academic, Dordrecht, (1996).
\bibitem{HM} Monch,   H.: Boundary value problems for nonlinear ordinary differential equations of second order in Banach spaces. Nonlinear Anal: TMA4, (1980).
\bibitem{YZ} Zhou,   Y.: Basic Theory of Fractional Differential Equations. World Scientific, Singapore,  (2014).
\bibitem{DB}Bothe,  D. :  Multivalued perturbation of $m$-accretive differential inclusions. Israel. J. Math. 108, 109--138  (1998).
\bibitem{sandev2011fractional} Sandev, Trifce and Metzler, Ralf and Tomovski, {\v{Z}}ivorad:Fractional diffusion equation with a generalized Riemann--Liouville time fractional derivative. Journal of Physics A: Mathematical and Theoretical, 44(25),255203 (2011).
\bibitem{furati2012existence}Furati, Khaled M and Kassim, Mohammed D and others: Existence and uniqueness for a problem involving Hilfer fractional derivative. Computers \& Mathematics with Applications, 64(6), 1616--1626 (2012).
\bibitem{kamocki2016fractional} Kamocki, Rafal and Obczynski, Cezary:On fractional Cauchy-type problems containing Hilfer's derivative. Electronic Journal of Qualitative Theory of Differential Equations, 50, 1--12,(2016).
\bibitem{mahmudov2015approximate} Mahmudov, NI and McKibben, MA: On the approximate controllability of fractional evolution equations with generalized Riemann-Liouville fractional derivative. Journal of Function Spaces, (2015).
\bibitem{ahmed2018impulsive} Ahmed, Hamdy M and El-Borai, Mahmoud M and El-Owaidy, Hassan M and Ghanem, Ahmed S:Impulsive Hilfer fractional differential equations. Advances in Difference Equations, 1, 226, (2018).
\bibitem{gu2015existence}Gu, H. and Trujillo, J.J., 2015. Existence of mild solution for evolution equation with Hilfer fractional derivative. Applied Mathematics and Computation, 257, pp.344-354.
\bibitem{ahmed2018nonlocal}Ahmed, H.M. and Okasha, A., 2018. Nonlocal Hilfer Fractional Neutral Integrodifferential Equations. International Journal of Mathematical Analysis, 12(6), pp.277-288.
\bibitem{mei2015existence} Mei, Z.D., Peng, J.G. and Gao, J.H., 2015. Existence and uniqueness of solutions for nonlinear general fractional differential equations in Banach spaces. Indagationes Mathematicae, 26(4), pp.669-678.
\bibitem{hilfer2002experimental}Hilfer, R. (2002). Experimental evidence for fractional time evolution in glass forming materials. Chemical Physics, 284(1-2), 399--408.
\bibitem{haubold2011mittag}Haubold, H.J., Mathai, A.M. and Saxena, R.K., 2011. Mittag-Leffler functions and their applications. Journal of Applied Mathematics, 2011.
\bibitem{ZZ}Zhang,  L. , Zhou,  Y.:  Fractional Cauchy problems with almost sectorial operators. Applied Mathematics and Computation. 257,145--157  (2015).
\bibitem{LPP} Lizama, C., Pereira A. and Ponce R.:On the compactness of fractional resolvet operator functions. Semigroup Forum (2016).
\bibitem{gwy} Liu L, Guo F, Wu C, Wu Y. Existence theorems of global solutions for nonlinear Volterra type integral equations in Banach spaces. Journal of Mathematical Analysis and Applications. 2005 Sep 15;309(2):638-49.
\bibitem{G1} Gou H, Li Y. The method of lower and upper solutions for impulsive fractional evolution equations. Annals of Functional Analysis. 2020 Jan 1:1-20.
\bibitem{G2} Nadeem M, Dabas J. Solvability of Fractional Order Semi-linear Stochastic Impulsive Differential Equation with State-Dependent Delay. Proceedings of the National Academy of Sciences, India Section A: Physical Sciences. 2019 Apr 6:1-9.
\bibitem{G3} Shu L, Xiao-Bao S, Mao J. Approximate controllability and existence of mild solutions for Riemann-Liouville fractional stochastic evolution equations with nonlocal conditions of order $1<\alpha<2$. Fractional Calculus and Applied Analysis. 2019 Jul 1;22(4):1086-112.
\bibitem{G4} Williams WK, Vijayakumar V, Udhayakumar R, Nisar KS. A new study on existence and uniqueness of nonlocal fractional delay differential systems of order $1<r<2$ in Banach spaces. Numerical Methods for Partial Differential Equations. 2020 Oct 7.
\bibitem{G5} Raja MM, Vijayakumar V, Udhayakumar R. Results on the existence and controllability of fractional integro-differential system of order $1<r< 2$ via measure of noncompactness. Chaos, Solitons $\&$ Fractals. 2020 Oct 1;139:110299.
\end{thebibliography}
\end{document}